
\documentclass[11pt]{article}

\usepackage[english]{babel}
\usepackage{amsmath,amsfonts,amssymb,amsthm}
\usepackage{latexsym}
\usepackage{makeidx}
\usepackage{amscd}
\usepackage{hyperref}

\usepackage{vmargin}
\setmarginsrb{0.65in}{1in}{0.65in}{1in}%
             {0.1in}{0.1in}{0.1in}{0.3in}

\numberwithin{equation}{section}

\newtheorem{definition}{Definition}[section]
\newtheorem{lemma}[definition]{Lemma}
\newtheorem{theorem}[definition]{Theorem}
\newtheorem{corollary}[definition]{Corollary}

\newtheorem{proposition}[definition]{Proposition}
\newtheorem{fact}[definition]{Fact}
\newtheorem{em-example}[definition]{Example}
\newtheorem{em-def}[definition]{Definition}        
\newtheorem{em-remark}[definition]{Remark}         
\newtheorem{em-question}[definition]{Question}

\newtheorem{problem}[definition]{Problem}

\newenvironment{example}{\begin{em-example} \em }{ \end{em-example}}
\newenvironment{remark}{\begin{em-remark} \em }{\end{em-remark}}

\newcommand{\R}{\mathbb R}
\newcommand{\N}{\mathbb N}
\newcommand{\C}{\mathbb C}
\newcommand{\Q}{\mathbb Q}
\newcommand{\Z}{\mathbb Z}

\newcommand{\f}{\phi}

\def\abg{\mathbf{AbGrp}}
\def\Mod{\mathbf{Mod}}
\def\id{\mathrm{id}}
\def\af{\mathbf{Flow}}

\DeclareMathOperator{\End}{End}
\DeclareMathOperator{\Aut}{Aut}

\def\htop{h_{\mathrm{top}}}
\DeclareMathOperator{\ent}{ent}
\DeclareMathOperator{\AT}{AT}

\global\def\hull#1{\left\langle{#1}\right\rangle}

\input xy
\xyoption{all}

\title{Entropy on abelian groups \footnote{MSC2010: 
20K30; 
37A35; 
37B40; 
11R06. 
Keywords: discrete dynamical system, algebraic entropy, group endomorphism, abelian group, addition theorem, Mahler measure, Lehmer problem.}}

\author{Dikran Dikranjan
\thanks{The first named author gratefully acknowledges the FY2013 Long-term visitor grant~L13710 by the Japan Society for the Promotion of Science (JSPS).}
\\{\footnotesize {\tt  dikran.dikranjan@uniud.it}} 
\\{\footnotesize Dipartimento di Matematica e Informatica,}
\\{\footnotesize Universit\`{a} di Udine,}
\\{\footnotesize Via delle Scienze, 206 - 33100 Udine, Italy} 
 \and Anna Giordano Bruno 
 \thanks{The second named author is supported by Programma SIR 2014 by MIUR (project GADYGR, number RBSI14V2LI, cup G22I15000160008).}
 \thanks{Work partially supported by the Fondazione Cassa di Risparmio di Padova e Rovigo (Progetto di Eccellenza ``Algebraic structures and their applications'') and by the ``National Group for Algebraic and Geometric Structures, and Their Applications'' (GNSAGA - INdAM).}
\\{\footnotesize {\tt  anna.giordanobruno@uniud.it}} 
\\{\footnotesize Dipartimento di Matematica e Informatica,}
\\{\footnotesize Universit\`{a} di Udine,}
\\{\footnotesize Via delle Scienze, 206 - 33100 Udine, Italy}
 }

\date{Dedicated to Professor Justin Peters}

\begin{document}

\maketitle


\abstract{We introduce the algebraic entropy for endomorphisms of arbitrary abelian groups, appropriately modifying existing notions of entropy.
The basic properties of the algebraic entropy are given, as well as various examples. The main result of this paper is the Addition Theorem showing that the algebraic entropy is additive in appropriate sense with respect to invariant subgroups. We give several applications of the Addition Theorem, among them the Uniqueness Theorem for the algebraic entropy in the category of all abelian groups and their endomorphisms. Furthermore, we point out the delicate connection of the algebraic entropy with the Mahler measure and Lehmer Problem in Number Theory.}

\section{Introduction}

Inspired by the notion of entropy invented by Clausius in Thermodynamics in the fifties of the nineteenth century, Shannon introduced entropy in Information Theory by the end of the forties of the last century. A couple of years later, Kolmogorov in \cite{K} and Sinai in \cite{Sinai} introduced the measure entropy in Ergodic Theory. By an appropriate modification of their definition, Adler, Konheim and McAndrew in \cite{AKM} obtained the topological entropy $\htop$ for continuous self-maps of compact topological spaces.
Moreover, they briefly sketched an  idea on how to define an algebraic entropy for endomorphisms of discrete abelian groups. Weiss in \cite{W} developed this idea and studied the algebraic entropy for endomorphisms of torsion abelian groups. Later on, Peters in \cite{Pet} modified this notion of algebraic entropy for automorphisms of arbitrary abelian groups.  The interest in algebraic entropy was renewed after the recent article \cite{DGSZ}, where 
 the case of endomorphisms of torsion abelian groups is thoroughly studied. 

\medskip
We introduce some notation in order to discuss in detail the above mentioned notions of algebraic entropy, with the aim to extend Peters' definition to arbitrary endomorphisms of abelian groups.
 
Let $G$ be an abelian group and let $\phi\in \End(G)$. For a non-empty subset $F$ of $G$ and $n\in\N_+$, the \emph{$n$-th $\phi$-trajectory of $F$} is 
$$T_n(\phi,F)=F+\phi(F)+\ldots+\phi^{n-1}(F),$$ 
and the \emph{$\phi$-trajectory of $F$} is $$T(\phi,F)=\sum_{n\in\N}\phi^n(F).$$

For a finite subgroup $F$ of $G$, the limit 
\begin{equation*}
H(\phi,F)={\lim_{n\to\infty}\frac{\log|T_n(\phi,F)|}{n}}
\end{equation*}
exists (see Fact \ref{Fact:limit:free}) and it is the \emph{algebraic entropy of $\phi$ with respect to $F$}. According to \cite{AKM,W}, the \emph{algebraic entropy} $\ent$ of $\phi$ is defined as
\begin{equation*}
\ent(\phi)=\sup\{H(\phi,F): F\ \text{finite subgroup of}\ G\}.
\end{equation*}
Clearly, this notion is more suitable for endomorphisms of \emph{torsion} abelian groups, as $\ent(\phi)=\ent(\phi\restriction_{t(G)})$, where $t(G)$ is the torsion part of $G$. We refer to \cite{DGSZ} as a convenient reference for the algebraic entropy $\ent$. 

\smallskip
Peters introduced a different notion of algebraic entropy for {\em automorphisms} $\phi$ of abelian groups, using instead of $T_n(\phi,F)=F+\phi(F)+\ldots+\phi^{n-1}(F)$, the ``negative part of the trajectory''
 $$
 T_n^-(\phi,F)=F+\phi^{-1}(F)+\ldots+\phi^{-n+1}(F) = T_n(\phi^{-1},F),
 $$ 
and instead of finite subgroups $F$ of $G$ just non-empty finite \emph{subsets} $F$ of $G$. Therefore, Peters' algebraic entropy may have non-zero values also for torsion-free abelian groups.

Obviously, this definition can be given using $T_n(\phi,F)$ as well, since  $T_n^-(\phi,F)=T_n(\phi^{-1},F)$. The approach with the ``positive'' partial trajectories $T_n(\phi,F)$ has the advantage to be applicable to any endomorphism $\phi$ (whereas $T_n^-(\phi,F)$ may be infinite when $\phi$ is not injective). Hence, we define the algebraic entropy $h$ for endomorphisms $\phi$ of abelian groups $G$ as follows.

For a non-empty finite \emph{subset} $F$ of $G$, the limit 
\begin{equation}\label{lim-eq}
H(\phi,F)=\lim_{n\to\infty}\frac{\log|T_n(\phi,F)|}{n}
\end{equation}
still exists (see Lemma \ref{lim-ex}(b)), and we call it the \emph{algebraic entropy of $\phi$ with respect to $F$}. The \emph{algebraic entropy} of $\phi$ is 
$$h(\phi)=\sup\{H(\phi,F):F\in[G]^{<\omega}\},$$
where $[G]^{<\omega}$ denotes the family of all non-empty finite subsets of $G$.

For endomorphisms of torsion abelian groups, $h$ coincides with $\ent$. More precisely, if $G$ is an abelian group and $\phi\in\End(G)$, then $\ent(\phi)=\ent(\phi\restriction_{t(G)})=h(\phi\restriction_{t(G)})$. 

\smallskip
The definition of $h$ can be extended to the non-abelian case. However, here we prefer to keep the exposition in the abelian setting, due to the differences and  the technical difficulties that naturally appear considering non-abelian groups. The reader interested  in the non-abelian case and in the connection to the growth rate of finitely generated groups may see \cite{DG_Pak,DG_PC}. For other related algebraic entropy functions see \cite{Bow,DGS,DGSV,FFK}.

\bigskip
The main result of this paper is the following key additivity property of the algebraic entropy, called Addition Theorem. The torsion case of this theorem is covered by the Addition Theorem for $\ent$ proved in \cite{DGSZ}.

\begin{theorem}[\underline{Addition Theorem}]\label{AT}
Let $G$ be an abelian group, $\phi\in \End(G)$, $H$ a $\phi$-invariant subgroup of $G$ and $\overline\phi\in \End(G/H)$ the endomorphism induced by $\phi$. Then 
\begin{equation}\label{AT-f}
h(\f)=h( \f\restriction_H) + h(\overline \f).
\end{equation}
\end{theorem}

In terms of the following diagram describing the above situation
$$\xymatrix{
H \ar[r] \ar[d]^{\phi\restriction_H} & G \ar[r] \ar[d]^\phi &G/H \ar[d]^{\overline\phi} \\
H \ar[r] & G \ar[r] & G/H
}$$
the equality \eqref{AT-f} of the Addition Theorem reads: the algebraic entropy of the mid endomorphism is the sum of the algebraic entropies of the two side ones. 
We adopt the notation $$\AT_h(G,\phi,H)$$ for an abelian group $G$, $\f\in\End(G)$ and a $\phi$-invariant subgroup $H$ of $G$,  to indicate briefly that ``the equality \eqref{AT-f} holds for the triple $(G,\phi,H)$''. 

\medskip
A relevant part of the proof of the Addition Theorem is based on the so called Algebraic Yuzvinski Formula (see Theorem \ref{Yuz} below), recently proved in \cite{GV}. To state the Algebraic Yuzvinski Formula, we need to recall the notion of Mahler measure, playing an important role in Number Theory and Arithmetic Geometry (see \cite[Chapter 1]{Ward0} and \cite{Hi}). 

For a primitive polynomial $f(t)=a_0+a_1t+\ldots+a_kt^k\in\Z[t]$, let $\alpha_1,\ldots,\alpha_k\in \mathbb C$ be the roots of $f(t)$ taken with their multiplicity.
The \emph{Mahler measure} of $f(t)$ is 
$$
m(f(t))= \log|a_k| + \sum_{|\alpha_i|>1}\log |\alpha_i|.
$$

The Mahler measure of a linear transformation $\f$ of a finite dimensional rational vector space $\Q^n$, $n\in\N_+$, is defined as follows. 
Let $g(t)\in\Q[t]$ be the characteristic polynomial of $\phi$. Then there exists a smallest $s\in\N_+$ such that $sg(t)\in\Z[t]$ (so $sg(t)$ is primitive). The \emph{Mahler measure of $\phi$} is $m(\phi)=m(sg(t))$.

\begin{theorem}[Algebraic Yuzvinski Formula]\label{Yuz}  
For $n\in \N_+$ and $\phi\in\End(\Q^n)$,
$$h(\f)=m(\phi).$$
\end{theorem}

The counterpart of this formula for the topological entropy was proved by Yuzvinski in \cite{juz67} (see also \cite{LW}). A direct proof (independent from the topological case) of the Algebraic Yuzvinski Formula was recently given in \cite{GV}, extending results from \cite{Vi} and using the properties of the Haar measure on locally compact abelian groups. A weaker form for the case of zero algebraic entropy is proved in \cite{DGZ} using basic linear algebra.

\medskip
The Algebraic Yuzvinski Formula gives one of the two ``normalization axioms'' that entail uniqueness of $h$; in fact, Theorem \ref{UT}(e) concerns the values $h(\phi)=m(\phi)$, where $\phi\in\End(\Q^n)$ and $n\in\N_+$. The other one is furnished by the following family of group endomorphisms. For any abelian group $K$ the \emph{(right) Bernoulli shift} $\beta_{K}:K^{(\N)}\to K^{(\N)}$ is defined by $(x_0,x_1,x_2,\ldots)\mapsto(0,x_0,x_1,\ldots)$. Then $h(\beta_K)=\log|K|$, with the usual convention that $\log|K|=\infty$ if $K$ is infinite (see Example \ref{beta}).  The value $h(\beta_K)$, when $K$ runs over all finite abelian groups, is the condition in Theorem \ref{UT}(d).
 
\medskip
 The Uniqueness Theorem for the algebraic entropy is our second main result, its proof uses the Addition Theorem and the Algebraic Yuzvinski Formula (see Theorem \ref{UT} below, where the algebraic entropy $h$ is considered as a collection $h=\{h_G:G\ \text{abelian group}\}$ of functions $h_G:\End(G)\to\R_{\geq0}\cup\{\infty\}$ defined by $h_G(\phi)=h(\phi)$ for every $\phi\in\End(G)$). It extends the Uniqueness Theorem given in \cite{DGSZ} for $\ent$ in the class of torsion abelian groups, that was inspired, in turn, by the Uniqueness Theorem for the topological entropy of Stojanov (see \cite{St}).

\smallskip
If $G$ and $H$ are abelian groups, $\phi\in\End(G)$ and $\eta\in\End(H)$, we say that $\phi$ and $\eta$ are \emph{conjugated} if there exists an isomorphism $\xi:G\to H$ such that $\eta=\xi\phi\xi^{-1}$. Sometimes we specify also the isomorphism $\xi$ by saying that $\phi$ and $\eta$ are \emph{conjugated by $\xi$}.

\begin{theorem}[\underline{Uniqueness Theorem}]\label{UT}
The algebraic entropy $h$ of the endomorphisms of abelian groups is the unique collection $h=\{h_G:G\ \text{abelian group}\}$ of functions $h_G:\End(G)\to\R_{\geq0}\cup\{\infty\}$ such that:
\begin{itemize}
\item[(a)] if $\phi\in\End(G)$ and $\eta\in\End(H)$ are conjugated, then $h_G(\f)=h_H(\eta)$ (i.e., $h$ is invariant under conjugation);
\item[(b)] if $G$ is an abelian group, $\phi \in \End(G)$ and $G$ is a direct limit of $\phi$-invariant subgroups $\{G_i:i\in I\}$, then $h_G(\phi) = \sup_{i\in I}\!h_{G_i}(\phi\restriction_{G_i})$; 
\item[(c)] $\AT_{h_G}(G,\phi,H)$ for every abelian group $G$, every $\phi\in\End(G)$ and every $\phi$-invariant subgroup $H$ of $G$;
\item[(d)] $h_{K^{(\N)}}(\beta_K)=\log|K|$ for any finite abelian group $K$;
\item[(e)] $h_{\Q^n}(\phi)=m(\phi)$ for every $n\in\N_+$ and every $\phi\in \End(\Q^n)$.
\end{itemize}
\end{theorem}

\medskip
Another application of the Addition Theorem in this paper concerns Lehmer Problem from Number Theory (see Problem \ref{h>0-pb}), posed by Lehmer in 1933, and still open (see \cite{Ward0,Hi,MRQ,Sm}). Let 
$$\mathfrak L:=\inf\{m(f(t))>0:f(t)\in\Z[t]\ \text{primitive}\}$$
(obviously, $\mathfrak L= \inf\{m(f(t))>0:f(t)\in\Z[t]\ \text{monic}\}$).

\begin{problem}[Lehmer Problem]\label{h>0-pb}
Is $\mathfrak L>0$?
\end{problem}

Let us see the connection of Lehmer Problem to algebraic entropy. Note first that the finite values of the algebraic entropy $\ent$ belong to the set $\log\N_+:=\{\log n:n\in\N_+\}$ (see Fact \ref{Fact:limit:free}). Then 
\begin{equation}\label{ent>log2}
\inf\{\ent(\phi)>0:G\ \text{abelian group}, \phi\in\End(G)\}=\log2.
\end{equation}
Consider now the infimum of the positive values of the algebraic entropy $h$:
$$\varepsilon:=\inf\{h(\phi)>0:G \text{ abelian group},\ \phi\in\End(G)\}.$$
The next theorem shows that the problem of deciding whether $\varepsilon>0$  is equivalent to Lehmer Problem. The counterpart of this theorem for the topological entropy is apparently well-known.
 
\begin{theorem}\label{epsilon}
The  equalities \ $\varepsilon=\inf\{h(\phi)>0:n\in\N_+,\phi\in\Aut(\Q^n)\}=\mathfrak L$\ hold.
\end{theorem}

Finally, we see that a positive solution to Lehmer Problem is equivalent to the realization of the finite values of the algebraic entropy, in the following sense.

\begin{definition}
Let $G$ be an abelian group and $\phi\in\End(G)$ with $h(\phi)<\infty$. We say that $h(\phi)$ \emph{realizes} if there exists $F\in[G]^{<\omega}$ such that $h(\phi)=H(\phi,F)$. 
\end{definition}

\begin{theorem}[\underline{Realization Theorem}]\label{RT}
The following conditions are equivalent:
\begin{itemize}
\item[(a)] $\mathfrak L>0$;
\item[(b)] $h(\phi)$ realizes for every abelian group $G$ and every $\phi\in\End(G)$ with $h(\phi)<\infty$.
\end{itemize}
\end{theorem}

\medskip
An application of the Addition Theorem and the Uniqueness Theorem is the so called Bridge Theorem, given in \cite{DG2}. In more detail, both Weiss in {\cite{W}} and Peters in \cite{Pet} connected the algebraic entropy to the topological entropy by using Pontryagin duality. 
For an abelian group $G$ and $\phi\in \End(G)$, we denote by $\widehat G$ the Pontryagin dual of $G$ and by $\widehat\phi:\widehat G\to \widehat G$ the dual endomorphism of $\phi$ (see \cite{HR,P}). It is known that $\widehat G$ is compact, and it is also metrizable when $G$ is countable. 

\begin{theorem}[Bridge Theorem]\label{Weiss}\label{Peters}
Let $G$ be an abelian group and $\phi\in \End(G)$.
\begin{itemize}
\item[(a)] \emph{\cite{W}} If $G$ is torsion, then $\ent(\phi)=\htop(\widehat\phi)$.
\item[(b)] \emph{\cite{Pet}} If $G$ is countable and $\phi$ is an automorphism, then $h(\phi)=\htop(\widehat\phi)$.
\end{itemize}
\end{theorem}

It was proved in \cite{DG2} that the conclusion $h(\phi)=\htop(\widehat\phi)$ remains true for \emph{arbitrary} abelian groups and their \emph{endomorphisms} $\phi$. Two proofs of this result are given in \cite{DG2}, one is based on the Addition Theorem, the Algebraic Yuzvinski Formula and their counterparts for the topological entropy, the second uses the Uniqueness Theorem.

For another extension of Theorem \ref{Weiss}(a) to the case of (totally disconnected) locally compact abelian groups, see \cite{DG-BT}. 
We do not discuss here this  extension as well as similar results from \cite{Pet1,Vi1}, since this paper is limited to the discrete case; various definitions of algebraic entropy in the general case of locally compact abelian groups can be found in \cite{Pet1,Vi,Vi1}.

\medskip
Let us mention, finally, another application of the Addition Theorem given in \cite{DG}, connecting the algebraic entropy with the growth rate of abelian groups with respect to endomorphisms. 
For further details on this and other applications of the Addition Theorem and the Algebraic Yuzvinski Formula refer to \cite{DG_Pak,DG_PC,DSV,GV-app} (see also the survey \cite{DSV}).

\bigskip
The paper is organized as follows.

\smallskip
Section \ref{sec-def} contains first of all the proof of the existence of the limit in \eqref{lim-eq} defining the algebraic entropy $h$. Then, the basic properties of $h$ are listed and verified (see Lemma \ref{restriction_quotient} and Proposition \ref{properties}); they are counterpart of the properties of $\ent$ proved in \cite{DGSZ,W}, as well as of the known properties of the topological entropy. Moreover, basic examples are given, among them the Bernoulli shift.

\smallskip
In Section \ref{tf-sec} we consider various properties of the algebraic entropy of endomorphisms $\phi$ of torsion-free abelian groups $G$. 
We underline the preservation of the algebraic entropy under the extension of an endomorphism $\phi$ of $G$ to the divisible hull of $G$ (see Proposition \ref{AA}). 
As examples, the algebraic entropy of the endomorphisms of $\Z$ and $\Q$ is computed (see Examples \ref{Zex} and \ref{Qex}).

\smallskip
The aim of Section \ref{fg-sec} is to reduce the computation of the algebraic entropy to the case of appropriately small groups.  We see in a first reduction that this means endomorphisms of \emph{countable} abelian groups (see Lemma \ref{reduction-to-countable}).
Moreover, we consider the structure of $\Z[t]$-module induced on an abelian group $G$ by the action of its endomorphism $\phi$, and we denote by $G_\phi$ the group $G$ considered as a $\Z[t]$-module under the action of $\phi$. One can compute the algebraic entropy of $\phi$ as the supremum of the algebraic entropies of the restrictions of $\phi$ to the finitely generated $\Z[t]$-submodules of $G_\phi$ (see Lemma \ref{remark-G_F-gen}). In particular, Proposition \ref{sup->lim} reduces the proof of the Addition Theorem to the case when $G_\phi$ is a finitely generated $\Z[t]$-module and $G$ has finite rank.

\smallskip
Section \ref{AT-sec} is dedicated to the Addition Theorem. Its proof is exposed in several steps, which are partial reductions gradually restricting the class of groups. We start by showing $\AT_h(G,\phi,t(G))$ (see Proposition \ref{AT-t(G)}). This allows us to focus on the case of torsion-free abelian groups. Using the properties established in the previous sections, we reduce to endomorphisms of finite-rank divisible torsion-free abelian groups, i.e., to endomorphisms $\phi:\Q^n\to \Q^n$ for some $n\in\N_+$. The next step (see Proposition \ref{red-to-inj}) is to see that $\f$ can be supposed injective (hence, also surjective). The ultimate case of automorphisms of $\Q^n$ can be managed through the Algebraic Yuzvinski Formula (see Theorem \ref{AT-tf-pure}). 

\smallskip
In Section \ref{uniq-sec} we prove the Uniqueness Theorem.
We give a direct proof of this result, but we show also how it can be deduced from a theorem by V\'amos on length functions (see Definition \ref{L-def} below, and \cite{NR,V}).
The approach based on length functions is powerful and clarifies the role of the five axioms in the statement of the Uniqueness Theorem. Indeed, the first three axioms witness the fact that the algebraic entropy is a length function of the category $\Mod_{\Z[t]}$ of all $\Z[t]$-modules, while the fourth and the fifth are used as normalizations. More precisely, a length function of $\Mod_{\Z[t]}$ is determined by its values on the cyclic modules of the form $\Z[t]/\mathfrak p$, where $\mathfrak p$ is a prime ideal of $\Z[t]$.
The ``Bernoulli normalization axiom'' gives the value of the algebraic entropy in the first case, while the ``Yuzvinski axiom'' in the second case.  A different relation between algebraic entropy and length functions can be found in \cite{SVV}.

\smallskip
In the final Section \ref{mahler-sec} we consider the relation of the algebraic entropy to the Mahler measure and Lehmer Problem. We give the proof of Theorem \ref{epsilon}, making use of the Addition Theorem. Moreover, we end the section and the paper by proving the Realization Theorem.

\bigskip
The deduction of the Addition Theorem from the Algebraic Yuzvinski Formula was obtained already in \cite{DG-peters}. But at that time no direct proof of the Algebraic  Yuzvinski Formula was available. On the other hand, its validity for automorphisms of finite dimensional rational vector spaces could be deduced from its counterpart for the topological entropy and Theorem \ref{Peters}(b).  As it was underlined in \cite{DG-peters}, ``the value of the Addition Theorem would be much higher if a purely algebraic proof of the Algebraic Yuzvinski Formula would be available''. 
Indeed, we apply the Algebraic Yuzvinski Formula in the proof of the Addition Theorem and from these two results we deduce the Uniqueness Theorem and the Realization Theorem. As mentioned above, another important application is given in \cite{DG2}, where Theorem \ref{Weiss}(a) is extended to a general Bridge Theorem for all the endomorphisms of arbitrary abelian groups, obtaining in this way also a completely new proof of Theorem \ref{Peters}(b) (the original proof makes a heavy use of convolutions). Finally, using the Bridge Theorem and the Uniqueness Theorem for the algebraic entropy, one can obtain also a Uniqueness Theorem for the topological entropy of the endomorphisms of compact abelian groups (see \cite[Corollary 3.3]{DG2}).

\subsection*{Notation and terminology}

We denote by $\mathbb Z$, $\mathbb N$, $\mathbb N_+$, $\Q$ and $\R$ respectively the set of integers, the set of natural numbers, the set of positive integers, the set of rationals and the set of reals. Let $\R_{\geq0}=\{r\in\R:r\geq0\}$. For $m\in\mathbb N_+$, we use $\mathbb Z(m)$ for the finite cyclic group of order $m$. 

Let $G$ be an abelian group. With a slight divergence with the standard use, we denote by $[G]^{<\omega}$ the set of all \emph{non-empty} finite subsets of $G$. If $H$ is a subgroup of $G$, we indicate this by $H\leq G$. For a subset $X$ of $G$ we denote by $\hull{X}$ the subgroup of $G$ generated by $X$. 
The free rank of $G$ is denoted by $r(G)$. 
The subgroup of torsion elements of $G$ is $t(G)$, while $D(G)$ denotes the divisible hull of $G$. For a cardinal $\alpha$ we denote by $G^{(\alpha)}$ the direct sum $\bigoplus_\alpha G$ of $\alpha$ many copies of $G$. 

Moreover, $\End(G)$ is the ring of all endomorphisms of $G$. We denote by $0_G$ and $\id_G$ respectively the endomorphism of $G$ which is identically $0$ and the identity endomorphism of $G$. If $G$ is torsion-free and $\phi\in \End(G)$, we denote by $\widetilde\phi:D(G)\to D(G)$ the unique extension of $\phi$ to $D(G)$.

If $G$ is an abelian group, $\phi\in\End(G)$ and $H$ is a $\phi$-invariant subgroup of $G$, we denote by $\overline\phi_H\in\End(G/H)$ the endomorphism induced by $\phi$ on the quotient $G/H$. When there is no possibility of confusion, we write simply $\overline\phi$.

If $R$ is a ring, $M$ is an $R$-module and $r\in R$, we denote by $\mu_r:M\to M$ the group endomorphism of $M$ defined by $\mu_r(x)=rx$ for every $x\in M$.
To avoid confusion, we write also $\mu_r^M$, when more precision is necessary.

\section{Basic properties of entropy}\label{sec-def}

Let $G$ be an abelian group and $\phi\in \End(G)$. For $F\in[G]^{<\omega}$, the $\phi$-trajectory $T(\phi,F)$ need not be a subgroup of $G$, so let $$V(\f,F)=\hull{\f^n(F):n\in\N}=\hull{T(\phi,F)}.$$ This is the smallest $\f$-invariant subgroup of $G$ containing $F$. 
For $g\in G$, we write simply $V(\phi,g)$ in place of $V(\phi,\{g\})$. So, for $F\in[G]^{<\omega}$, one has $$V(\f,F)=\sum_{g\in F}V(\phi,g).$$

\begin{definition}\label{fg-def}
Let $G$ be an abelian group and $\phi\in \End(G)$. We say that $G$ is \emph{finitely $\phi$-generated} by $F\in[G]^{<\omega}$ if $G=V(\phi,F)$.
\end{definition}

As a start, we prove that the limit in \eqref{lim-eq} defining the algebraic entropy exists. 

\begin{lemma}\label{subadd}\label{lim-ex}
Let $G$ be an abelian group, $\phi\in\End(G)$ and $F\in[G]^{<\omega}$. For every $n\in\N_+$ let $c_n=\log|T_n(\phi,F)|$.
\begin{itemize}
\item[(a)] The sequence $\{c_n\}_{n\in\N_+}$ is subadditive, i.e., $c_{n+m}\leq c_n+c_m$ for every $n,m\in\N_+$.
\item[(b)] The limit $H(\phi,F)=\lim_{n\to\infty}\frac{c_n}{n}$ exists and $H(\phi,F)=\inf_{n\in\N_+}\frac{c_n}{n}$.
\end{itemize}
\end{lemma}
\begin{proof}
(a) By definition 
\begin{align*}
T_{n+m}(\phi,F) &=F+\phi(F)+\ldots+\phi^{n-1}(F)+\phi^n(F)+\ldots+\phi^{n+m-1}(F)\\
&=T_n(\phi,F)+\phi^n(T_m(\phi,F)).
\end{align*}
Consequently,
\begin{equation*}
c_{n+m} =\log|T_{n+m}(\phi,F)|\leq \log(|T_n(\phi,F)|\cdot |T_m(\phi,F)|)=c_n+c_m.
\end{equation*}

\smallskip
(b) By item (a) the sequence $\{c_n\}_{n\in\N_+}$ is subadditive. Then the sequence $\{\frac{c_n}{n}:n\in\N_+\}$ has limit and $\lim_{n\to\infty}\frac{c_n}{n}=\inf_{n\in\N_+}\frac{c_n}{n}$ by a folklore fact from Calculus, due to Fekete (see \cite{Fek}).
\end{proof}

\begin{remark}\label{0inF} Let $G$ be an abelian group and $\phi\in\End(G)$.
\begin{itemize}
\item[(a)] The function $H(\phi,-)$ is non-decreasing, that is, $H(\phi,F)\leq H(\phi,F')$ for every $F,F'\in[G]^{<\omega}$ with $F\subseteq F'$.
\item[(b)] By  item (a), if $\mathcal F$ is a cofinal subfamily of $[G]^{<\omega}$ with respect to inclusion, then $h(\phi)=\sup\{H(\phi,F):F\in\mathcal F\}$.
In particular, it is possible to calculate $h(\phi)$ supposing without loss of generality that $0\in F$, that is, $$h(\phi)=\sup\{H(\phi,F):F\in[G]^{<\omega}, 0\in F\}.$$
\end{itemize}
\end{remark}

The definition of $H(\phi,F)$ in \eqref{lim-eq} shows that the algebraic entropy $H(\phi,F)$ of $\f$ with respect to $F\in[G]^{<\omega}$ measures the growth rate of the partial trajectories $T_n(\phi,F)$  when they approximate the trajectory $T(\phi,F)$. Our next comment concerns the case when $T(\phi,F)= V(\phi,F)$ is a subgroup. This occurs for example when $F$ is a finite {\em subgroup} of $G$. In such a case, it was proved in \cite{DGSZ} that $H(\phi,F) = \log m$, where $m > 0 $ is a divisor of $|F|$.  The number $m$ can be found, without any computation of limits, as follows. 

\begin{fact}\label{Fact:limit:free}
\emph{Let $G$ be an abelian group, $\phi\in\End(G)$ and $F$ is finite subgroup of $G$.
First, we note that $N = T(\phi,F)$ is a torsion subgroup of $G$. Moreover, with $\f_1 = \f\restriction_{N}$, one has  $H(\phi,F)= H(\phi_1,F)$, i.e., the computation of $H(\phi,F)$ can be carried out in the torsion $\f$-invariant subgroup $N$ of $G$. 
The equality $N= F +\phi(N)$ shows that $N/\phi(N)$ is a quotient of the finite subgroup $F$, so $ |N/\phi(N)|$ is a divisor of $|F|$. It was proved in \cite{DG-lf} that $H(\phi,F) = \log  |N/\phi(N)|$, provided $\f$ is injective. This formula was stated without a proof and without asking $\f$ to be injective by Yuzvinski in \cite{Y}. A first proof was provided in \cite{DSV}, where it was pointed out that the formula is wrong if $\f$ is not injective (e.g., when $\f = 0_G$ and $|F|> 1$). Finally, in \cite[Lemma 4.1]{DG-lf} $\ker\phi\cap N$ was proved to be finite and the following more precise equality was obtained in the general case: 
$$
H(\phi,F) = \log \left |\frac{T(\phi,F)}{\phi(T(\phi,F))} \right|-\log|\ker\phi\cap T(\phi,F)|.
$$ }
\end{fact}

We consider now the two most natural examples. 

\begin{example}\label{h(id)=0}
If $G$ is an abelian group, then $h(0_G)=h(\id_G)=0$.

The equality $h(0_G)=0$ is obvious. 
To show that also $h(\id_G)=0$, let $F=\{f_1,\ldots,f_t\}$ be a finite subset of $G$. As $T_n(\id_G,F)= \underbrace{F+\ldots +F}_n$ for all $n\in\N_+$,
every $x\in T_n(\id_G,F)$ can be written as $x=\sum_{i=1}^t m_i f_i$, for some $m_i\in\N$ with $\sum_{i=1}^t m_i=n$.  Clearly, $(m_1,\ldots,m_t)\in\{0,1,\ldots,n\}^t$,  so $|T_n(\id_G,F)|\leq (n+1)^t$. Hence, $H(\id_G,F)=0$, and consequently $h(\id_G)=0$ by the arbitrariness of $F\in[G]^{<\omega}$. 
\end{example}

For $G$ an abelian group and $\phi\in\End(G)$, the \emph{hyperkernel} of $\phi$ is $$\ker_\infty \f = \bigcup_{n\in\N_+} \ker \f^n.$$ The subgroup $\ker_\infty \f$ is $\f$-invariant and also invariant for inverse images. Hence, the induced endomorphism $\overline \f: G /\ker_\infty \f \to G /\ker_\infty \f$ is injective.

\begin{proposition}\label{ker_infty}
Let $G$ be an abelian group and $\phi\in\End(G)$. Then $h(\f\restriction_{\ker_\infty\f})=0$.
\end{proposition}
\begin{proof}
Let $F\in[G]^{<\omega}$ with $F\subseteq \ker_{\infty}\phi$ and assume without loss of generality that $0\in F$ (see Remark \ref{0inF}(b)). There exists $m\in\N_+$ such that $\phi^m(F)=0$. Consequently, $T_n(\phi,F)=T_m(\phi,F)$ for every $n\in\N$ with $n\geq m$. Hence, $H(\phi,F)=0$, 
and we conclude that $h(\phi\restriction_{\ker_\infty\phi})=0$ by the arbitrariness of $F$.
\end{proof}

In the next lemma we show that $h$ is monotone under taking restrictions to invariant subgroups and under taking induced endomorphisms on quotients over invariant subgroups.

\begin{lemma}\label{restriction_quotient}
Let $G$ be an abelian group, $\phi\in\End(G)$ and $H$ a $\phi$-invariant subgroup of $G$. Then $h(\f)\geq \max\{h(\f\restriction_H),h(\overline{\f})\}.$
\end{lemma}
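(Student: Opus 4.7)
The plan is to establish the two inequalities $h(\phi)\geq h(\phi\restriction_H)$ and $h(\phi)\geq h(\overline\phi)$ separately, each by transporting finite subsets across the natural maps and comparing trajectories in the two groups.

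For the restriction inequality, I would observe that since $H$ is $\phi$-invariant, any finite subset $F\in [H]^{<\omega}$ is already a finite subset of $G$, and the $(\phi\restriction_H)$-trajectories coincide as sets with the $\phi$-trajectories: $T_n(\phi\restriction_H,F)=T_n(\phi,F)$ for every $n\in\N_+$. Consequently $H(\phi\restriction_H,F)=H(\phi,F)\leq h(\phi)$. Taking the supremum over $F\in[H]^{<\omega}$ yields $h(\phi\restriction_H)\leq h(\phi)$.

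For the quotient inequality, let $\pi\colon G\to G/H$ be the canonical projection and let $\overline F\in[G/H]^{<\omega}$. I would lift $\overline F$ to a finite subset $F\in[G]^{<\omega}$ with $|F|=|\overline F|$ by choosing one representative of each coset in $\overline F$. Because $\overline\phi\circ\pi=\pi\circ\phi$, one has $\pi(T_n(\phi,F))=T_n(\overline\phi,\overline F)$ for every $n\in\N_+$, so the restriction of $\pi$ to $T_n(\phi,F)$ surjects onto $T_n(\overline\phi,\overline F)$ and hence $|T_n(\overline\phi,\overline F)|\leq |T_n(\phi,F)|$. Taking logarithms, dividing by $n$ and passing to the limit gives $H(\overline\phi,\overline F)\leq H(\phi,F)\leq h(\phi)$, and the supremum over $\overline F$ yields $h(\overline\phi)\leq h(\phi)$.

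There is no real obstacle here: both parts are direct consequences of how $\phi$-trajectories behave under inclusions of invariant subgroups and under quotient maps, combined with the monotonicity of $H(\phi,-)$ recorded in Remark \ref{0inF}. The only small point to keep straight is that in the quotient case one must pick a set-theoretic lift of the same cardinality, so that the size of $T_n(\overline\phi,\overline F)$ is bounded by the size of the image under $\pi$ of $T_n(\phi,F)$ rather than by the size of $T_n(\phi,F)$ for some strictly larger lift; this is purely a matter of bookkeeping and does not affect the argument.
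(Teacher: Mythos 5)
Your proposal is correct and follows essentially the same route as the paper: the restriction inequality via the identity $T_n(\phi\restriction_H,F)=T_n(\phi,F)$ for $F\in[H]^{<\omega}$, and the quotient inequality via lifting $\overline F$ to a finite $F_0\subseteq G$ and using $\pi(T_n(\phi,F_0))=T_n(\overline\phi,\overline F)$. (Your caveat about choosing a lift of the same cardinality is harmless but unnecessary: any finite lift $F_0$ with $\pi(F_0)=\overline F$ already gives $|T_n(\overline\phi,\overline F)|\leq|T_n(\phi,F_0)|$, which is all the argument needs.)
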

\begin{proof}
For every $F\in[H]^{<\omega}$, obviously $H(\phi\restriction_H,F)=H(\phi,F)$, so $H(\phi\restriction_H, F)\leq h(\phi)$. Hence, $h(\phi\restriction_H)\leq h({\phi})$.

Now assume that $F\in[G/H]^{<\omega}$ and $F=\pi(F_0)$ for some $F_0\in[G]^{<\omega}$, where $\pi:G\to G/H$ is the canonical projection. Then $\pi(T_n(\phi,F_0))= T_n(\overline{\phi},F)$ for every $n\in\N_+$. Therefore, $ h(\phi)\geq H(\phi,F_0)\geq H(\overline{\phi}, F)$ and by the arbitrariness of $F$ this proves $h(\phi)\geq h(\overline{\phi})$.
\end{proof}

The next proposition collects the basic properties of $h$, which are also typical properties of the known entropy functions. Indeed, they are inspired by similar properties of the algebraic entropy $\ent$ (see \cite{W} and \cite{DGSZ}) and the topological entropy (see \cite{AKM} and \cite{St}). In the case of $h$ they were proved in \cite{Pet} for automorphisms, here we extend them for endomorphisms. We shall refer to them as, respectively, \emph{Invariance under conjugation}, \emph{Logarithmic Law}, \emph{Continuity for direct limits} and \emph{weak Addition Theorem}.

\begin{proposition}\label{properties}
Let $G$ be an abelian group and $\phi\in\End(G)$.
\begin{itemize}
\item[(a)] If $H$ is another abelian group, $\eta\in\End(H)$ and $\phi$ and $\eta$ are conjugated, then $H(\phi,F)=H(\eta,\xi(F))$ for every $F\in[G]^{<\omega}$; in particular, $h(\phi)=h(\eta)$.
\item[(b)] For every $k\in\N_+$, $h(\phi^k) = k h(\phi)$. If $\phi$ is an automorphism, then $h(\phi^k) = |k|h(\phi)$ for every $k\in\Z$.
\item[(c)] If $G$ is a direct limit of $\phi$-invariant subgroups $\{G_i:i\in I\}$, then $h(\phi)=\sup_{i\in I}h(\phi\restriction_{G_i})$.
\item[(d)] If $G=G_1\times G_2$, $\phi=\phi_1\times \phi_2$ with $\phi_i\in\End(G_i)$ and $F_i\in[G_i]^{<\omega}$, $i=1,2$, then $H(\phi,F_1\times F_2)=H(\phi_1,F_1)+H(\phi_2,F_2)$; consequently, $h(\phi_1\times\phi_2)=h(\phi_1)+h(\phi_2)$
\end{itemize}
\end{proposition}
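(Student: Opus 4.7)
My plan is to verify each of (a)--(d) directly from the definition of $H(\phi,F)$, exploiting the fact (Corollary~\ref{lim-ex}) that $H(\phi,F) = \inf_{n\in\N_+}\frac{1}{n}\log|T_n(\phi,F)|$ together with the monotonicity of $H(\phi,-)$ noted in Remark~\ref{0inF}. In each item the key step is to write down an exact (or nearly exact) identity between the relevant $\phi$-trajectories and pass to the limit; none of the four items requires the Addition Theorem or the Algebraic Yuzvinski Formula.

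For (a), I would check by induction on $n$ the identity $T_n(\eta,F) = \xi(T_n(\phi,\xi^{-1}(F)))$ for every $F\in[H]^{<\omega}$. Since $\xi$ is a bijection this yields $|T_n(\eta,F)| = |T_n(\phi,\xi^{-1}(F))|$, hence $H(\eta,F) = H(\phi,\xi^{-1}(F))\le h(\phi)$. Taking the supremum over $F$ and invoking symmetry gives $h(\eta)=h(\phi)$.

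For (b), I would first use Remark~\ref{0inF} to assume $0\in F$ and set $F_k = T_k(\phi,F)$. A telescoping computation gives $T_n(\phi^k,F_k) = T_{nk}(\phi,F)$; dividing by $n$ and letting $n\to\infty$ yields $H(\phi^k,F_k) = k\,H(\phi,F)$, so $h(\phi^k)\ge k\,h(\phi)$. Conversely, $0\in F$ also forces $T_n(\phi^k,F)\subseteq T_{nk}(\phi,F)$ (padding indices with $0$'s from $F$), which gives $H(\phi^k,F)\le k\,H(\phi,F)$ and hence $h(\phi^k)\le k\,h(\phi)$. When $\phi$ is an automorphism, it remains to show $h(\phi^{-1}) = h(\phi)$; I would derive this from $\phi^{n-1}(T_n(\phi^{-1},F)) = T_n(\phi,\phi^{n-1}F)$ of equal cardinality (in fact, after a cleaner re-indexing one obtains $\phi^{n-1}(T_n(\phi^{-1},F)) = T_n(\phi,F)$), and bijectivity of $\phi^{n-1}$ on $G$ makes the two trajectories equinumerous.

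For (c), directedness of $\{G_i\}_{i\in I}$ ensures that every $F\in[G]^{<\omega}$ lies in some $G_i$, so $T_n(\phi,F) = T_n(\phi\restriction_{G_i},F)$ and $H(\phi,F)\le h(\phi\restriction_{G_i})\le \sup_i h(\phi\restriction_{G_i})$. Taking the supremum over $F$ gives one inequality, while the reverse is immediate from Lemma~\ref{restriction_quotient}. For (d), product-shaped sets split perfectly: $T_n(\phi_1\times\phi_2,F_1\times F_2) = T_n(\phi_1,F_1)\times T_n(\phi_2,F_2)$, so $|T_n(\phi,F_1\times F_2)| = |T_n(\phi_1,F_1)|\cdot|T_n(\phi_2,F_2)|$ and taking $\log$ and dividing by $n$ yields the ``$\ge$'' direction. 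For the reverse, any $F\in[G_1\times G_2]^{<\omega}$ embeds into $\pi_1(F)\times\pi_2(F)$, and monotonicity of $H(\phi,-)$ reduces to the product case. The step I expect to need the most care is the automorphism half of (b), where one must identify trajectories of $\phi^{-1}$ with those of $\phi$ after translation by a power of $\phi$; everything else is straightforward bookkeeping with the trajectories and the Fekete limit.
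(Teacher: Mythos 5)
Your proposal is correct and follows essentially the same route as the paper's proof: the conjugation identity for (a), the trajectory identities $T_n(\phi^k,T_k(\phi,F))=T_{nk}(\phi,F)$ and $T_n(\phi^k,F)\subseteq T_{nk}(\phi,F)$ (the paper uses $T_{kn-k+1}$, which makes no difference in the limit) for (b), directedness for (c), and the product splitting of trajectories for (d). The only blemish is the first-stated identity $\phi^{n-1}(T_n(\phi^{-1},F))=T_n(\phi,\phi^{n-1}F)$ in the automorphism half of (b), which is not correct as written, but you immediately replace it with the correct $\phi^{n-1}(T_n(\phi^{-1},F))=T_n(\phi,F)$, which is exactly the paper's identity.
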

\begin{proof}
(a) For $F\in[G]^{<\omega}$ and $n\in\N_+$, $T_n(\eta,\xi(F))=\xi(F)+\xi(\phi(F))+\ldots+\xi(\phi^{n-1}(F))$, as $\xi\phi^k = \phi^k \xi$ for $k= 1,2, \ldots ,n-1$. Since $\xi$ is an isomorphism, $|T_n(\f, F)|=|T_n(\eta, \xi(F))|$, and so $H(\phi,F)=H(\eta,\xi(F))$. This proves that $h(\phi)=h(\eta)$.

\smallskip
(b) Fix $k\in\N_+$. First, we prove the inequality $h(\phi^k)\leq k h(\phi)$.
Let $F\in[G]^{<\omega}$, assuming without loss of generality that $0\in F$ (see Remark \ref{0inF}(b)). Let $n\in \N_+$, then $T_n(\phi^k,F)\subseteq T_{kn-k +1}(\phi,F)$ and so
\begin{align*}
H(\phi^k,F)&=\lim_{n\to\infty}\frac{\log|T_n(\phi^k,F)|}{n}  \leq \lim_{n\to\infty}\frac{\log|T_{kn-k+1}(\phi,F)|}{n}\\
& =\lim_{n\to\infty} \frac{\log|T_{kn-k+1}(\phi,F)|}{kn-k+1}\cdot \lim_{n\to\infty}\frac{kn-k+1}{n}\\
& =k \lim_{n\to\infty} \frac{\log|T_{kn-n+1}(\phi,F)|}{kn-k+1}=kH(\phi,F)\leq k h(\phi).
\end{align*}
Therefore, $h(\phi^k)\leq k h(\phi)$.

To check the inequality $h(\phi^k)\geq k h(\phi)$, let $F\in[G]^{<\omega}$ and $n\in \N_+$. With $F_1=T_k(\phi,F)$ one has 
$T_n(\phi^k,F_1)=T_{kn}(\phi,F)$. Then 
$$\frac{1}{k}h(\phi^k) \geq  \frac{1}{k}H(\phi^k,F_1)=\lim_{n\to\infty}\frac{\log|T_n(\phi^k,F_1)|}{kn} =\lim_{n\to\infty}\frac{\log|T_{kn}(\phi,F)|}{kn}=H(\phi,F).$$ 
Therefore, $h(\phi^k)\geq k h(\phi)$. 

Now assume that $\phi$ is an automorphism. It suffices to prove that $h(\phi^{-1})=h(\phi)$. Let $F\in[G]^{<\omega}$ and $n\in \N_+$. Then $T_n(\phi^{-1},F)=\phi^{-n+1}(T_n(\phi,F))$; in particular, $|T_n(\phi^{-1}, F)|=|T_n(\phi,F)|$, as $\phi$ is an automorphism. This yields $H(\phi^{-1},F)=H(\phi,F)$, hence $h(\phi^{-1})=h(\phi)$.

\smallskip
(c) By Lemma \ref{restriction_quotient}, $h(\f)\geq h(\f\restriction_{G_i})$ for every $i\in I$ and so $h(\f)\geq\sup_{i\in I}h(\f\restriction_{G_i})$.
To check the converse inequality, let $F\in[G]^{<\omega}$. Since $G=\varinjlim\{G_i:i\in I\}$ and $\{G_i:i\in I\}$ is a directed family, there exists $j\in I$ such that $F\subseteq G_j$. Then $H(\f,F)=H(\f\restriction_{G_j},F)\leq h(\f\restriction_{G_j})$. This proves that $h(\f)\leq\sup_{i\in I}h(\f\restriction_{G_i})$.

\smallskip
(d) For every $n\in\N_+$, $$T_n(\phi,F_1\times F_2)=T_n(\phi_1,F_1)\times T_n(\phi_2, F_2)$$ 
Hence, 
\begin{equation}\label{times-eq}
H(\phi,F_1\times F_2)=H(\phi_1,F_1)+H(\phi_2,F_2).
\end{equation} 
Consequently, $h(\phi)\geq h(\phi_1)+h(\phi_2)$. Since every $F\in[G]^{<\omega}$ is contained in some $F_1\times F_2$, for $F_i\in[G_i]^{<\omega}$, $i=1,2$, and so $H(\phi,F)\leq H(\phi,F_1\times F_2)$. Therefore, \eqref{times-eq} proves also that $h(\phi)\leq h(\phi_1)+h(\phi_2)$.
\end{proof}

The following easy remark can be deduced directly either from item (a) or from item (b) of Proposition \ref{properties}: if $G$ is an abelian group and $\phi\in\End(G)$, then 
\begin{equation}\label{-f}
h(-\phi)=h(\phi).
\end{equation}

The next is a direct consequence of Proposition \ref{properties}(b).

\begin{corollary}\label{0<->0}
Let $G$ be an abelian group and $\f\in\End(G)$. Then:
\begin{itemize}
\item[(a)]$h(\f)=0$ if and only if $h(\f^k)=0$ for some $k\in\N_+$;
\item[(b)]$h(\f)=\infty$ if and only if $h(\f^k)=\infty$ for some $k\in\N_+$.
\end{itemize}
\end{corollary}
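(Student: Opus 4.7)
The plan is to note that this corollary is an immediate consequence of the identity $h(\phi^k) = k\, h(\phi)$ established in Proposition \ref{properties}(b), applied in both directions and interpreting the arithmetic appropriately in the extended real line $[0,\infty]$.

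For part (a), the forward direction is trivial: if $h(\phi)=0$, then by Proposition \ref{properties}(b) we have $h(\phi^k)=k\cdot 0=0$ for every $k\in\N_+$, so in particular for some $k\in\N_+$. For the converse, if $h(\phi^k)=0$ for some $k\in\N_+$, then the same proposition gives $k\, h(\phi)=0$, and since $k\geq 1$ is a positive integer, we may divide to conclude $h(\phi)=0$.

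For part (b), if $h(\phi)=\infty$, then $h(\phi^k)=k\cdot\infty=\infty$ for every $k\in\N_+$. Conversely, if $h(\phi^k)=\infty$ for some $k\in\N_+$, then $k\, h(\phi)=\infty$ with $k$ a finite positive integer, forcing $h(\phi)=\infty$.

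No serious obstacle is expected here; the only point requiring any care is the interpretation of $k\, h(\phi)=\infty$ when $h(\phi)$ may take the value $+\infty$, but this is standard for entropy-type invariants and does not affect the argument.
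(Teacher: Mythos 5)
Your proof is correct and is exactly the paper's argument: the paper states this corollary as a direct consequence of Proposition \ref{properties}(b), i.e.\ of the identity $h(\phi^k)=k\,h(\phi)$, which is precisely what you use. The brief remark about interpreting $k\cdot\infty$ in the extended real line is the only point of care, and you handle it appropriately.
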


The next somewhat technical consequence of Proposition \ref{properties}(a) is frequently applied in the following sections. Let $G$ be an abelian group, $\f\in\End(G)$ and $H,K$ two $\f$-invariant subgroups of $G$. Consider $\overline\phi_H\in\End(G/H)$ and let $\pi: G\to G/H$ be the canonical projection. The subgroup $\pi(K)$ of $G/H$ is $\overline\f_H$-invariant, and $\pi(K)=K+H/H\cong K/H\cap K$. Let $\overline{\f \restriction_{K}}\in\End(K/H\cap K)$ be the induced endomorphism, then $\overline\f _H\restriction_{\pi(K)}$ is conjugated to $\overline{\f \restriction_{K}}$, so Proposition \ref{properties}(a) gives the equality
\begin{equation}\label{restr-to-pi}
h(\overline{\f}_H\restriction_{\pi(K)})=h(\overline{\f \restriction_{K}}).
\end{equation}

In the following example we compute the algebraic entropy of the Bernoulli shift.

\begin{example}\label{beta} 
For any abelian group $K$, $$h(\beta_K)=\log|K|,$$ with the usual convention that $\log|K|=\infty$, if $|K|$ is infinite.

It is proved in \cite{DGSZ} that $h(\beta_K)=\ent(\beta_K)=\log|K|$ for every finite abelian group $K$. We verify now that $h(\beta_\Z)=\infty$. Indeed, let $G=\Z^{(\N)}$; for every prime $p$, the subgroup $pG$ of $G$ is $\beta_\Z$-invariant, so $\beta_\Z$ induces an endomorphism $\overline{\beta_\Z}:G/pG\to G/pG$. Since $G/pG\cong \Z(p)^{(\N)}$, and $\overline{\beta_\Z}$ is conjugated to $\beta_{\Z(p)}$ through this isomorphism, $h(\overline{\beta_\Z})=h(\beta_{\Z(p)})=\log p$ by Proposition \ref{properties}(a). Therefore, $h(\beta_\Z)\geq\log p$ for every prime $p$ by Lemma \ref{restriction_quotient}, and so $h(\beta_\Z)=\infty$. 

Assume that $K$ is an infinite abelian group. If $K$ is non-torsion, then $K$ contains a subgroup $C\cong \Z$, so $K^{(\N)}$ contains the $\beta_K$-invariant subgroup $C^{(\N)}$ isomorphic to $\Z^{(\N)}$. Moreover, $\beta_K\restriction_{C^{(\N)}} = \beta_C$ is conjugated to $\beta_\Z$. Hence, by Lemma \ref{restriction_quotient}, Proposition \ref{properties}(a) and the previous part of this example, $h(\beta_K)\geq h(\beta_C)=h(\beta_\Z)=\infty$. If $K$ is torsion, then $K$ contains arbitrarily large finite subgroups $H$. Consequently, $K^{(\N)}$ contains the $\beta_K$-invariant subgroup $H^{(\N)}$. By Lemma \ref{restriction_quotient} and the first part of this example, $h(\beta_K)\geq h(\beta_H)=\log|H|$ for every $H$. So $h(\beta_K)=\infty$.
\end{example}

In \cite{AADGH}, extending the notion of Bernoulli shift, for every self-map $\lambda: X \to X$ of a non-empty set $X$, the generalized shift $\sigma_\lambda$ of an arbitrary Cartesian power $K^X$ of an abelian group $K$ was introduced by letting $\sigma_\lambda((x_i)_{i\in X})=(x_{\lambda(i)})_{i\in X}$ for every $(x_i)_{i\in X}\in K^X$. In case $\lambda$ is finitely many-to-one, the subgroup $K^{(X)}$ is $\sigma_\lambda$-invariant, so one can consider also the generalized shift $\sigma_\lambda: K^{(X)} \to K^{(X)}$.  In \cite{AADGH} and \cite{GB}, $h(\sigma_\lambda)$ was computed on direct sums and on direct products, respectively.

\begin{remark}
There is a remarkable connection between entropy and recurrence in the spirit of Poincar\' e recurrence theorem.    
We do not pursue this topic in the present paper, here we only recall some recent results from  \cite{DG}. 

Let $G$ be an abelian group and $\phi\in\End(G)$. An element $x\in G$ is \emph{quasi-periodic} if there exist $n>m$ in $\N$ such that $\phi^n(x)=\phi^m(x)$; $\f$ is \emph{locally quasi-periodic} if every $x\in G$ is a quasi-periodic point of $\phi$. Moreover, $\phi$ is \emph{quasi-periodic} if there exist $n>m$ in $\N$ such that $\phi^n(x)=\phi^m(x)$ for every $x\in G$. If $G$ is finitely $\phi$-generated and $\f$ is locally quasi-periodic, then $\f$ is quasi-periodic. The set $Q(G,\f)$ of all quasi-periodic points of $\phi$ in $G$ is a $\f$-invariant subgroup of $G$ containing $\ker_\infty \f$. 

 It follows from Proposition \ref{properties}(b) that either $h(\f) = 0$ or $h(\f) = \infty$ whenever $\f$ is quasi-periodic, but one can prove more.
According to \cite{DG}, there exists a greatest $\phi$-invariant subgroup $P=P(G,\f)$ of $G$ such that $h(\phi\restriction_P)=0$. Moreover, $P(G/P,\bar \f) =0$ and $P$ contains $Q(G,\f)$ (so $h(\f) = 0$ in case $\f$ is {\em locally} quasi-periodic). On the other hand, $\f\in \End(G)$ is called \emph{algebraically ergodic} if $Q(G,\f)= 0$. Then $\phi$ is algebraically ergodic if and only if $P$ is trivial. Therefore, $\overline \f \in \End(G/P)$ is algebraically ergodic and $h(\f)=h(\overline\phi)$ (see \cite{DG}).
\end{remark}

\section{Entropy on torsion-free abelian groups}\label{tf-sec}

Here we dedicate more attention to the torsion-free abelian groups. We start by computing the algebraic entropy of the endomorphisms of $\Z$, 
for the counterpart for $\Q$ see Example \ref{Qex}. 

\begin{example}\label{Zex}
\begin{itemize}
\item[(a)] For every non-zero $k\in\Z$, $\mu_k\in\End(\Z)$ has $h(\mu_k)=\log |k|$. As $h(\mu_k) =h(\mu_{-k})$ by \eqref{-f}, we may assume without loss of generality that $k\in \N_+$.

For $k=1$ the conclusion follows from Example \ref{h(id)=0}. Assume that $k>1$ and let $F_0=\{0,1,\ldots,k-1\}\in[\Z]^{<\omega}$ and $n\in\N_+$. Every $m\in\N$ with $m<k^n$ can be uniquely written in the form $m=f_0+f_1k+\ldots+f_{n-1}k^{n-1}$ with all $f_i\in F_0$. Then $T_n(\mu_k,F_0)=\{m\in\N:m<k^n\}$, and so $|T_n(\mu_k,F_0)|=k^n$. Consequently $H(\mu_k,F_0)=\log k$ and this yields $h(\mu_k)\geq \log k$. 
To prove the converse inequality $h(\mu_k)\leq\log k$, fix $m\in\N_+$ and let $F_m=\{0,\pm1,\pm 2,\ldots,\pm m\}\in[\Z]^{<\omega}$ and $n\in\N_+$. Then $|x|\leq mk^n$ for every $x\in T_n(\mu_k,F_m)$. So $|T_n(\mu_k,F_m)|\leq 3mk^n$, hence $H(\mu_k,F_m)\leq \log k$. Since each $F\in[\Z]^{<\omega}$ is contained in $F_m$ for some $m\in\N_+$, we obtain $h(\mu_k)\leq \log k$.

\item[(b)] For $n,k\in\N_+$ and $\mu_k:\Z^n\to\Z^n$, $h(\mu_k)=n\log k$. This follows from Proposition \ref{properties}(d) and item (a).
\end{itemize}
\end{example}

Item (a) of the above example shows a difference with the torsion case. In fact, for an abelian group $G$ and $\phi\in\End(G)$, $\phi$ is \emph{integral} if there exists $f(t)\in\Z[t]\setminus\{0\}$ such that $f(\phi)=0$. According to \cite[Lemma 2.2]{DGSZ}, if $G$ is torsion and $\phi$ is integral, then $\ent(\phi)=0$. On the other hand, for $k>1$ the endomorphism $\mu_k:\Z\to \Z$ in (a) of the above example is integral over $\Z$ (as $\mu_k(x)-kx=0$ for all $x\in\Z$), nevertheless, $h(\mu_k)=\log k>0$.

\begin{example} 
Fix $k\in \Z$ and consider the automorphism $\f:\Z^2\to \Z^2$ defined by $\f(x,y)= (x+ky, y)$ for all $(x,y)\in\Z^2$. Then $h(\f) =0$. 

Let $m\in\N_+$ and $F_m=\{0,\pm1,\pm 2,\ldots,\pm m\}\times \{0,\pm1,\pm 2,\ldots,\pm m\}\in[\Z^2]^{<\omega}$. Every $F\in[\Z^2]^{<\omega}$ is contained in $F_m$ for some $m\in\N$. Therefore, it suffices to show that $H(\f,F_m)=0$. One can prove by induction that, for every $n\in\N_+$, $T_n(\f,F_m)$ is contained in a parallelogram with sides $2nm$ and $nm(2+nk-k)$, so $|T_n(\f,F_m)|\leq 2n^2m^2(2+nk-k)$. Thus $H(\f,F_m)=0$ for every $m\in\N$ and so $h(\phi)=0$.
\end{example}

Next we give some properties of the algebraic entropy for endomorphisms $\phi$ of (finitely $\phi$-generated) abelian groups.

\begin{lemma}\label{X}
Let $G$ be an abelian group and $\f\in\End(G)$. If  $V(\phi,g)$ has infinite rank for some $g\in G$, then  
$V(\phi,g)$ is torsion-free and $h(\phi\restriction_{V(\phi,g)})=h(\phi)=h(\overline\phi)=\infty$, where $\overline\phi\in \End(G/t(G))$ is the induced endomorphism.
\end{lemma}

\begin{proof} Let $H:=V(\phi,g)$. To prove that $H$ is  torsion-free consider the canonical projection  $\pi:H\to H/t(H)$, where $H/t(H)= \pi(V(\f,g))=V(\overline\f,\pi(g))$ is torsion-free and has infinite rank.  Hence, $\hull{\overline\f^{m}(\pi(g))}\cap T_m(\overline\f,\hull{\pi(g)})=0$ for every $m\in\N_+$, so $T_m(\overline\f,\hull{\pi(g)})=\bigoplus_{k=0}^{m-1}\hull{\overline\f^k(\pi(g))}$ and $\pi(V(\f,g))=\bigoplus_{k\in\N}\hull{\overline\f^k(\pi(g))}\cong\Z^{(\N)}$. 
Suppose that $x\in H$ is torsion, then $\pi(x)$ is torsion as well, so $\pi(x) = 0$ as $H/t(H)$ is torsion-free. If $x = \sum_{i=0}^mk_i \f^i(g)$, then $\pi(x) = \sum_{i=0}^mk_i \overline\f^i(\pi(g)) = 0$. This entails $k_0=\ldots =k_m=0$, so $x=0$. Hence, $t(H) = 0$, and so $H$ is torsion-free. 

Since $\f\restriction_{H}$ is conjugated to $\beta_\Z$, by Proposition \ref{properties}(a) and Example \ref{beta} we conclude that $h(\f\restriction_H)=h(\beta_\Z)=\infty$. Therefore, $h(\overline\phi)=\infty$ and $h(\f)=\infty$ by Lemma \ref{restriction_quotient}. 
\end{proof}

The following are consequences of Lemma \ref{X}.

\begin{corollary}\label{cyc_case}
Let $G$ be a torsion-free abelian group and $\f\in\End(G)$. 
\begin{itemize}
\item[(a)] If $G$ is finitely $\f$-generated, then $G$ has finite rank if and only if $h(\f)<\infty$. 
\item[(b)] If $h(\phi\restriction_{V(\phi,g)})=\infty$ for some $g\in G$, then $h(\f\restriction_{V(\phi,z)})=\infty$ for every $z\in V(\phi,g)\setminus\{0\}$.
\end{itemize}
\end{corollary}
\begin{proof}
(a) If $r(G) = n$ is finite, then $D(G)\cong \Q^n$. Theorem \ref{Yuz} gives $h(\widetilde\phi)<\infty$, and so $h(\f)\leq h(\widetilde\phi)<\infty$ by Lemma \ref{restriction_quotient}. 

Now assume that $h(\phi)<\infty$. Since $G$ is finitely $\f$-generated, then $G$ is a sum of $\phi$-invariant subgroups of the form $V(\phi,g_i)$ for finitely many $g_i\in G$. Lemma \ref{X} implies that all $r(V(\phi,g_i))$ are finite, hence $r(G)$ is finite as well.

\smallskip
(b) By item (a), $h(\phi\restriction_{V(\phi,g)})=\infty$ implies $r(V(\phi,g))$ infinite. For $z\in V(\phi,g)\setminus\{0\}$, it is easy to see that $r(V(\phi,z))$ is infinite as well, and so $h(\f\restriction_{V(\phi,z)})=\infty$ by Lemma \ref{X}.
\end{proof}

By Corollary \ref{cyc_case}(a), if $G$ is a torsion-free abelian group, $\phi\in\End(G)$ and $g \in G$, then
\begin{center}
$h(\f\restriction_{V(\phi,g)})=\infty$ if and only if $r(V(\phi,g))$ is infinite.
\end{center}
Moreover, in view of Corollary \ref{cyc_case}(b), if $h(\phi\restriction_{V(\phi,g)})=\infty$, then for every $\f$-invariant subgroup $H$ of $V(\phi,g)$ one has the following striking dichotomy:
\begin{center}
either $H=0$ or $h(\f\restriction_H)=\infty$.  
\end{center}

The next is a consequence of Lemma \ref{X} and Corollary \ref{cyc_case} that completes Lemma \ref{X} from the point of view of the Addition Theorem.

\begin{lemma}\label{CorollaryNov8:1} Let $G$ be an abelian group and $\f\in\End(G)$ such that $V(\f,g)$ has infinite rank for some $g\in G$. Then $h(\f)=\infty$ and $\AT_h(G,\f,H)$ for every subgroup $H$ of $G$.
 \end{lemma}

\begin{proof} As $V(\f,g)$ has infinite rank,  $h(\f)=\infty$ and $V(\f,g)$ is torsion-free by Lemma \ref{X}. Let $H$ be a subgroup of $G$ and suppose that $V(\f,g)\cap H \ne  0$. As $V(\f,g)$ is torsion-free, we can apply Corollary \ref{cyc_case}(b) to $\f\restriction_{V(\f,g)}$ to deduce that $h(\f\restriction_{V(\f,g)\cap H})=\infty$. By Lemma \ref{restriction_quotient}, $h(\f\restriction_H)=\infty$ as well. Then $h(\f)=h(\f\restriction_H)=\infty$ and in particular $\AT_h(G,\f,H)$. 
 
Assume now that $V(\f,g)\cap H=0$. Let $\pi:G\to G/H$ be the canonical projection. Therefore, $V(\f,g)$ projects injectively in the quotient $G/H$ and so $\pi(V(\f,g))=V(\overline\f,\pi(g))$ has infinite rank. So $h(\overline \f)=\infty$ by Lemma \ref{X}. Hence, $h(\f)=h(\overline\f)=\infty$ by Lemma \ref{restriction_quotient} and in particular $\AT_h(G,\f,H)$. 
\end{proof}

A subgroup $H$ of an abelian group $G$ is \emph{essential} if and only if for every $x\in G\setminus\{0\}$ there exists $k \in \Z$ such that $kx \in H\setminus\{0\}$. In case $G$ is torsion-free, $H$ is an essential subgroup of $G$ if and only if $G/H$ is torsion.

\begin{corollary}\label{CoAAA} 
Let $G$ be a torsion-free abelian group, $\f\in\End(G)$ and let $H$ be an essential $\f$-invariant subgroup of $G$. If $h(\phi)<\infty$, then $h(\overline\phi)=0$. 
\end{corollary}
\begin{proof} Since $G/H$ is torsion, it suffices to see that $h(\overline\f\restriction_{(G/H)[p]})=0$ for every prime $p$, by \cite[Proposition 1.18]{DGSZ}. To this end we have to show that every $\overline x\in (G/H)[p]$ has finite trajectory under $\overline \f$. By Lemma \ref{X}, $V(\phi,x)$ has finite rank, say $n\in\N$. Then there exist $k_i\in\Z$, $i=0,\ldots,n$, such that $\sum_{i=0}^{n}k_i\f^i(x)=0$. Since $G$ is torsion-free, we can assume without loss of generality that at least one of these coefficient is not divisible by $p$. Now, projecting in $G/H$, we conclude that $\sum_{i=0}^{n}k_i\overline\f^i(\overline x)=0$ is a non-trivial linear combination in $(G/H)[p]$. Hence, $\overline x\in (G/H)[p]$ has finite trajectory under $\overline \f$.
\end{proof}

The next result reduces the computation of the algebraic entropy for endomorphisms of torsion-free abelian groups to the case of endomorphisms of \emph{divisible} torsion-free abelian groups. Item (b) was announced without proof by Yuzvinski in \cite{juz67}.
   
\begin{proposition}\label{AA}
Let $G$ be a torsion-free abelian group and $\f\in\End(G)$. Then:
\begin{itemize}
\item[(a)] if $F\in[D(G)]^{<\omega}$ and $m\in\N_+$ satisfy $mF\subseteq G$, then $H(\phi,mF)=H(\widetilde\phi,mF)=H(\widetilde\phi,F)$;
\item[(b)]  $h(\f)=h(\widetilde\f)$.
\end{itemize}
\end{proposition}
\begin{proof}
(a) The automorphism $\mu_m$ of $D(G)$ commutes with $\widetilde \f$. Hence, Proposition \ref{properties}(a) yields $H(\f,mF)=H(\widetilde\phi,mF)=H(\widetilde \f,F)$. 

\smallskip
(b) Obviously, $h(\f)\leq h(\widetilde\f)$ by Lemma \ref{restriction_quotient}.  By the arbitrariness of $F$ in item (a), we have also $h(\f)\geq h(\widetilde\f)$.
\end{proof}

Proposition \ref{AA} may fail if the abelian group $G$ is not torsion-free. Indeed, for $G = \Z(2)^{(\N)}$ and $\beta_{\Z(2)} \in \End(G)$ one has $h(\beta_{\Z(2)})=\ent(\beta_{\Z(2)})=\log 2$. On the other hand, for $D(G)=\Z(2^\infty)^{(\N)}$ the endomorphism $\beta_{\Z(2^\infty)}$ extends $\beta_{\Z(2)}$ and has 
$h(\beta_{\Z(2^\infty)})=\ent(\beta_{\Z(2^\infty)})=\infty$ (see Example \ref{beta}).

\medskip
The properties from the next lemma, frequently used in the sequel, are well known and easy to prove.
Recall that a subgroup $H$ of an abelian group $G$ is \emph{pure} in $G$ if $H\cap mG=mH$ for every $m\in\Z$. The \emph{purification} $H_*$ of $H$ in $G$ is the smallest pure subgroup of $G$ containing $H$.

\begin{lemma}\label{div<->pure}
Let $G$ be an abelian group and $H$ a subgroup of $G$.
\begin{itemize}
\item[(a)] If $H$ is divisible, then $H$ is pure.
\item[(b)] If $G$ is divisible, then $H$ is divisible if and only if $H$ is pure.
\item[(c)] If $G$ is torsion-free, then $H$ is pure if and only if $G/H$ is torsion-free.
\end{itemize}
\end{lemma}

Now we see that in the torsion-free case it suffices to compute the algebraic entropy in an essential invariant subgroup. As a by-product, this proves $\AT_h(G,\phi,H)$ for a torsion-free abelian group $G$, $\phi\in\End(G)$ and an essential $\phi$-invariant subgroup $H$ of $G$.

\begin{corollary}\label{CoAA}\label{AT-tf-ess}
Let $G$ be a torsion-free abelian group, $\f\in\End(G)$ and $H$ a $\f$-invariant subgroup of $G$. 
\begin{itemize}
\item[(a)] The purification $H_*$ of $H$ in $G$ is $\f$-invariant and $h(\f\restriction_H)=h(\f\restriction_{H_*})$.
\item[(b)] If $H$ is essential in $G$, then $h(\phi)=h(\phi\restriction_H)$. Consequently, $\AT_h(G,\phi,H)$.
\end{itemize}
\end{corollary}
\begin{proof} Even if (b) follows from (a) with $H_* = G$, we shall prove first (b).  
Consider the common divisible hull $D$ of $H$ and $G$. Then $\widetilde \f:D\to D$ is the (unique)  common extension of $\f$ and $\f\restriction_H$. Proposition \ref{AA} applies to the pairs $D, G$ and $D, H$, giving $h(\f)=h(\widetilde\f)=h(\f\restriction_H)$. For the second assertion note that this equality  implies $\AT_h(G,\phi,H)$ in case  $h(\phi)=\infty$. If $h(\phi)<\infty$, then $h(\overline\phi)=0$ by Corollary \ref{CoAAA} and so $\AT_h(G,\phi,H)$ follows again.

Now we can deduce (a) from (b). The verification of the first assertion is immediate (see for example \cite[Lemma 3.3(a)]{SZ2}). The second assertion follows from (b) applied to $H_*$, $\f\restriction_{H_*}\in \End(H_*)$ and the $\f$-invariant subgroup $H$ of $H_*$. 
\end{proof}

The final part of item (b) of this corollary can be stated equivalently as $\AT_h(G,\phi,H)$ whenever $G$ is torsion-free and $G/H$ is torsion.

The next corollary is another consequence of Proposition \ref{AA} and Corollary \ref{CoAA}. It shows that the verification of the Addition Theorem for torsion-free abelian groups and their pure subgroups can be reduced to the case of divisible ones.

\begin{corollary}\label{->D(G)}
Let $G$ be a torsion-free abelian group, $\f\in\End(G)$ and $H$ a pure $\phi$-invariant subgroup of $G$. Considering $D(H)$ as a subgroup of $D(G)$, one has 
$$h(\phi)=h(\widetilde\phi), \ h(\phi\restriction_H)=h(\widetilde\phi\restriction_{D(H)})\ \mbox{ and }\ h(\overline {\f})=h(\overline {\widetilde \f}),$$ 
where $\overline\phi\in \End(G/H)$ and $\overline{\widetilde\phi}\in \End(D(G)/D(H))$. In particular, 
\begin{equation}\label{puro:in:divisibile}
\AT_h(G,\f,H) \Leftrightarrow \AT_h(D(G),\widetilde\f,D(H)).
\end{equation}
\end{corollary}
\begin{proof}
Since the purification $H_*$ of $H$ in $D(G)$ is divisible in view of Lemma \ref{div<->pure}(b),  $H_*=D(H)$ and $H = D(H) \cap G$, by the purity of $H$ in $G$.
By Proposition \ref{AA}, 
\begin{equation}\label{lig1}
h(\phi)=h(\widetilde\phi)\ \text{and}\ h(\phi\restriction_H)=h(\widetilde\phi\restriction_{D(H)}).
\end{equation}
Let $\pi:D(G)\to D(G)/D(H)$ be the canonical projection. Then $\pi(G)$ is essential in $D(G)/D(H)$ and $h(\overline{\widetilde\phi})=h(\overline{\widetilde\phi}\restriction_{\pi(G)})$ by Corollary \ref{CoAA}(b). Since $G/H\cong \pi(G)$, and $\overline\f$ is conjugated to $\overline{\widetilde\f}\restriction_{\pi(G)}$ by this isomorphism, $h(\overline\f)=h(\overline{\widetilde\f}\restriction_{\pi(G)})$ by Proposition \ref{properties}(a). Hence, $h(\overline {\f})=h(\overline {\widetilde \f})$. 
The conclusion follows from this equality and \eqref{lig1}.
\end{proof}

The validity of the right-hand-side of \eqref{puro:in:divisibile}, i.e., $\AT_h(D,\psi,K)$, for a \emph{finite-rank} divisible torsion-free abelian group $D$, $\psi\in\End(D)$ and a pure $\psi$-invariant subgroup $K$ of $D$, is established in Theorem \ref{AT-tf-pure} below. 

\medskip
 We conclude the section computing the algebraic entropy of the endomorphisms of $\Q$.

\begin{example}\label{Qex}
\begin{itemize}
\item[(a)] Let $\phi\in\End(\Q)$. Then there exists $r\in\Q$ such that $\f=\mu_r$.  If $r=0,\pm 1$, then $h(\phi)=0$ by Example \ref{h(id)=0} and \eqref{-f}.  By \eqref{-f} we can assume that $r>0$, and applying Proposition \ref{properties}(b) we may assume also that $r>1$. Let $r=\frac{a}{b}$, where $(a,b)=1$. We prove that $h(\phi)=\log a$. 

To prove that $h(\phi)\geq \log a$, take $F_0=\{0,1,\ldots,a-1\}$ and let $n\in\N_+$. 
It is easy to check that all sums $f_0+f_1\frac{a}{b}+\ldots+f_{n-1}\frac{a^{n-1}}{b^{n-1}}$, with $f_i\in F_0$, are pairwise distinct.
This shows that $|T_n(\phi,F_0)|=a^n$, and so $H(\phi,F_0)=\log a$, witnessing $h(\phi)\geq\log a$.

To prove the inequality $h(\phi)\leq\log a$, note that the subgroup $H$ of $\Q$ formed by all fractions having as denominators powers of $b$ (i.e., the subring of $\Q$ generated by $\frac{1}{b}$) is essential and $\phi$-invariant. Hence, $h(\phi)=h(\phi\restriction_H)$ by Corollary \ref{CoAA}(b).
Now for any $m\in\N_+$ consider $F_m=\left\{\pm\frac{r}{b^m}:  0\leq r \leq mb^m\right\}$. So $F_m=\hull{\frac{1}{b^m}}\cap [-m,m]$, where the 
interval $[-m,m]$ is taken in $H$. Let us observe that $\phi^k(F_m)\subseteq \hull{\frac{1}{b^{m+k}}}\cap \left[-m\frac{a^k}{b^k},m\frac{a^k}{b^k}\right]$, consequently $$T_n(\phi, F_m)\subseteq \underbrace{M+\ldots+ M}_n$$ where $M=\hull{\frac{1}{b^{m+n-1}}}\cap  \left[-m\frac{a^{n-1}}{b^{n-1}},m\frac{a^{n-1}}{b^{n-1}}\right]$. Therefore, 
$|T_n(\phi, F_m)|\leq 2nb^{m+n-1}m\frac{a^{n-1}}{b^{n-1}}$. Hence, 
\begin{equation*}\begin{split}
\log|T_n(\phi,F_m)|\leq \log2n+m\log b+(n-1)\log a.
\end{split}\end{equation*}
Thus $H(\phi,F_m)\leq\log a$. Since each $F\in[H]^{<\omega}$ is contained in $F_m$ for some $m\in\N_+$, this proves that $h(\phi)=h(\phi\restriction_H)\leq\log a$. 

\item[(b)] For $n\in\N_+$ and $r=\frac{a}{b}\in\Q$ with $a>b>0$, consider $\mu_r:\Q^n\to\Q^n$. 
Applying Proposition \ref{properties}(d) and item (a) we conclude that $h(\mu_r)=n\log a$. 

\item[(c)] Let $G$ be a torsion-free abelian group and consider $\mu_k:G\to G$ for some $k\in\N$ with $k>1$.
Then $$h(\mu_k)=\begin{cases}r(G)\log k & \text{if}\ r(G)\ \text{is finite},\\ \infty& \text{if}\ r(G)\ \text{is infinite}.\end{cases}$$
According to Proposition \ref{AA}, we can assume without loss of generality that $G$ is divisible. Let $\alpha=r(G)$. Then $G\cong\Q^{(\alpha)}$, with ${\mu_k}$ conjugated to $\mu_k^{\Q^{(\alpha)}}:\Q^{(\alpha)}\to\Q^{(\alpha)}$. 
If $\alpha\in\N$, then $h({\mu_k})=h(\mu^{\Q^{(\alpha)}}_k)=\alpha\log k$, by item (b) and Proposition \ref{properties}(a). If $\alpha$ is infinite, by Lemma \ref{restriction_quotient} and in view of the finite case, $h(\mu_k)> n\log k$ for every $n\in\N$. Hence, $h(\mu_k)=\infty$.  
\end{itemize}
\end{example}

In item (a) of the above example we have computed explicitly the algebraic entropy of $\mu_r:\Q\to \Q$, with $r=\frac{a}{b}>1$ and $(a,b)=1$. One can also apply the Algebraic Yuzvinski Formula; indeed, the unique eigenvalue of $\mu_r$ is $\frac{a}{b}>1$, and so the Algebraic Yuzvinski Formula \ref{Yuz} gives $h(\mu_r)=\log a$.
This formula was proved by Abramov in \cite{Ab} for the topological entropy of the automorphisms of $\widehat\Q$.

\section{Entropy of finitely generated flows}\label{fg-sec}

Consider the category $\abg$ of all abelian groups and their homomorphisms.  One can introduce the category $\af_\abg$ of flows of $\abg$ with objects the pairs $(G,\phi)$ with $G\in\abg$ and $\phi\in\End(G)$ (named \emph{algebraic flows} in \cite{DG1}).  A morphism $u:(G,\phi)\to(H,\psi)$ in $\af_\abg$ between two algebraic flows $(G,\phi)$ and $(H,\psi)$ is given by a homomorphism $u: G \to H$ such that the diagram
\begin{equation}\label{casc-mor}
\begin{CD}
G @>\phi>> G\\ 
@V u VV  @VV u V\\ 
H @>>\psi> H
\end{CD}
\end{equation}
in $\abg$ commutes. Two algebraic flows $(G,\phi)$ and $(H,\psi)$ are isomorphic in $\af_\abg$ precisely when the homomorphism $u: G \to H$ in \eqref{casc-mor}
is an isomorphism in $\abg$.

\smallskip
If $G$ is an abelian group and $\phi\in\End(G)$, then $G$ admits a structure of $\Z[t]$-module with multiplication determined by $tx=\phi(x)$ for every $x\in G$; we denote by $G_\phi$ the abelian group $G$ seen as a $\Z[t]$-module under the action of $\phi$. One has an isomorphism of categories
\begin{equation}\label{iso}
\af_\abg\cong\Mod_{\Z[t]}, 
\end{equation}
given by the functor $F:\af_\abg\to\Mod_{\Z[t]}$, associating to an algebraic flow $(G,\phi)$ the $\Z[t]$-module $G_\phi$ (see \cite[Theorem 3.2]{DG1}). In particular, for a morphism $u:(G,\phi)\to (H,\psi)$ in $\af_\abg$, $F(u)=u:G_\phi\to H_\psi$ is a homomorphism of $\Z[t]$-modules.
In the opposite direction consider the functor $F':\Mod_{\Z[t]}\to\af_\abg$, which associates to $M\in\Mod_{\Z[t]}$ the algebraic flow $(M,\mu_t)$, where
we denote still by $M$ the underling abelian group of the module $M$. If $u:M\to N$ is a homomorphism in $\Mod_{\Z[t]}$, then $F'(u)=u:(M,\mu_t)\to(N,\mu_t)$ is a morphism in $\af_\abg$.

\begin{remark}\label{af-mod}
By the isomorphism \eqref{iso}, every function $f:\af_\abg\to\R_{\geq0}\cup\{\infty\}$ defined on endomorphisms of abelian groups can be viewed as a function $f:\Mod_{\Z[t]}\to\R_{\geq0}\cup\{\infty\}$ by letting $f(M)=f(M,\mu_t)=f(\mu_t)$.

In particular, this holds for the algebraic entropy, so we can consider $$h:\Mod_{\Z[t]}\to\R_{\geq0}\cup\{\infty\},$$
letting $h(G_\phi)=h(\phi)$ for every $G_\phi\in\Mod_{\Z[t]}$.
\end{remark}

\begin{definition}[see Definition \ref{fg-def}]
Let $(G,\phi)$ be an algebraic flow. We say that $(G,\phi)$ is a \emph{finitely generated flow} if $G_\phi$ is a finitely generated $\Z[t]$-module (i.e., $G$ is finitely $\phi$-generated). 
\end{definition}

Clearly,  the supporting group of a finitely generated flow is countable. 

\medskip
Our aim is to reduce the computation of algebraic entropy to the case of finitely generated flows. We start from basic properties.

\begin{lemma}\label{fin-gen+noet->fin-gen}
Let $G$ be an abelian group, $\phi\in \End(G)$ and assume that $G$ is finitely $\phi$-generated.
\begin{itemize}
\item[(a)] If $H$ is a $\phi$-invariant subgroup of $G$, then $H$ is finitely $\phi\restriction_H$-generated and $G/H$ is finitely $\overline\phi$-generated.
\item[(b)] There exists $m\in\N_+$ such that $m \cdot t(G)=0$; so there exists a torsion-free subgroup $K$ of $G$ such that $G\cong K\times t(G)$.
\end{itemize}
\end{lemma}
\begin{proof}
(a) The first assertion follows from the fact that the ring $\Z[t]$ is Noetherian, the second one is obvious. 

\smallskip
(b) By (a) $t(G)$ is finitely generated as a $\Z[t]$-module, that is, $t(G)=V(\f,F')$ for some $F'\in[t(G)]^{<\omega}$. Since $F'$ is finite, there exists $m\in\N_+$ such that $mF'=0$. Then $m\cdot t(G)=0$. The second assertion follows from a theorem of Kulikov (see \cite[Section 27.5]{F}).
\end{proof}

For an abelian group $G$ and $\phi\in\End(G)$, let 
$$\mathfrak F(G,\phi):=\{V(\phi,F):F\in[G]^{<\omega}\}$$
be the family of all finitely $\phi$-generated subgroups of $G$. All these subgroups are clearly $\phi$-invariant.

\begin{lemma}\label{remark-G_F-gen}
Let $G$ be an abelian group and $\f\in\End(G)$. Then $G=\varinjlim\mathfrak F(G,\phi)$, and so
\begin{equation}\label{lastEq}
h(\f)=\sup \{h(\f\restriction_N): N\in\mathfrak F(G,\phi) \}.
\end{equation}
\end{lemma}
\begin{proof}
The family $\mathfrak F(G,\phi)$ is a direct system, as $N_1 + N_2 \in \mathfrak F(G,\phi)$ for $N_1,N_2\in \mathfrak F(G,\phi)$. That $h(\f)=\sup \{h(\f\restriction_N): N\in\mathfrak F(G,\phi) \}$ follows from Proposition \ref{properties}(c).
\end{proof}

Since $h$ is defined ``locally'', Lemma \ref{remark-G_F-gen} permits to reduce the computation of the algebraic entropy to finitely generated flows.

\smallskip
We see now that for every algebraic flow $(G,\phi)$ there exists a countable $\phi$-invariant subgroup $S$ of $G$ such that $h(\phi)=h(\phi\restriction_S)$.

\begin{definition}
Let $G$ be an abelian group and $\f\in \End(G)$. An \emph{entropy support} of $(G,\f)$ is a countable $\f$-invariant subgroup $S$ of $G$ such that $h(\f\restriction_S)=h(\f)$.
\end{definition}

Clearly, every countable $\f$-invariant subgroup of $G$ containing an entropy support of $(G,\phi)$ has the same property. This means that such a subgroup is not uniquely determined. 
The family ${\mathcal S}(G,\f)$ of all entropy supports of $(G,\f)$ is always non-empty:

\begin{lemma}\label{reduction-to-countable}
Let $G$ be an abelian group and $\f\in\End(G)$. Then there exists an entropy support of $(G,\phi)$.
\end{lemma}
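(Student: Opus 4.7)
The plan is to construct $S$ as the $\phi$-invariant subgroup generated by countably many finite sets $F_n$ chosen so that the values $H(\phi, F_n)$ approximate $h(\phi)$ from below.

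First I would split into two cases according to whether $h(\phi)$ is finite or infinite. If $h(\phi) < \infty$, then by the definition of $h(\phi)$ as a supremum, for each $n \in \N_+$ I can pick $F_n \in [G]^{<\omega}$ with $H(\phi, F_n) > h(\phi) - 1/n$. If $h(\phi) = \infty$, then for each $n \in \N_+$ I can pick $F_n \in [G]^{<\omega}$ with $H(\phi, F_n) > n$. In both cases, set
\[
S = V\bigl(\phi, \textstyle\bigcup_{n \in \N_+} F_n\bigr) = \hull{\phi^k(x) : k \in \N,\ x \in \bigcup_{n} F_n}.
\]
By construction $S$ is a $\phi$-invariant subgroup of $G$, and since it is generated as an abelian group by a countable set (a countable union of finite sets of iterates), it is countable.

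Next I would verify that $h(\phi\restriction_S) = h(\phi)$. The inequality $h(\phi\restriction_S) \leq h(\phi)$ is immediate from Lemma \ref{restriction_quotient}. For the reverse inequality, observe that for each $n$ we have $F_n \subseteq S$, so $H(\phi\restriction_S, F_n) = H(\phi, F_n)$ (the trajectories are computed by the same formula, and they lie in $S$). Therefore
\[
h(\phi\restriction_S) \geq H(\phi\restriction_S, F_n) = H(\phi, F_n)
\]
for every $n$. In the finite case this gives $h(\phi\restriction_S) \geq h(\phi) - 1/n$ for all $n$, hence $h(\phi\restriction_S) \geq h(\phi)$; in the infinite case it gives $h(\phi\restriction_S) \geq n$ for all $n$, hence $h(\phi\restriction_S) = \infty$.

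There is no real obstacle here: the only point that requires a moment of care is that $S$, built from countably many finite sets together with all their $\phi$-iterates, remains countable, which is clear. Thus $S \in \mathcal{S}(G, \phi)$, proving the lemma.
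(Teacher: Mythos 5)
Your proposal is correct and follows essentially the same route as the paper: the paper likewise chooses a sequence $\{F_n\}\subseteq[G]^{<\omega}$ with $h(\phi)=\sup_n H(\phi,F_n)$, sets $S=V(\phi,\bigcup_n F_n)$, and concludes via $H(\phi\restriction_S,F_n)=H(\phi,F_n)$ together with the monotonicity from Lemma \ref{restriction_quotient}. Your explicit case split on $h(\phi)$ finite versus infinite just unpacks the paper's one-line choice of the family $\{F_n\}$.
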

\begin{proof}
By \eqref{lastEq}, there exists a subfamily $\{F_n\}_{n\in\N}\subseteq [G]^{<\omega}$ such that $h(\f)=\sup_{n\in\N}H(\f,F_n)$. Then $S=\sum_{n\in\N}V(\f, F_n)$ is a countable $\f$-invariant subgroup of $G$ such that 
$$h(\f)=\sup_{n\in\N}H(\f,F_n)=\sup_{n\in\N}H(\f\restriction_S,F_n)=h(\f\restriction_S).$$
Hence, $S\in\mathcal S(G,\phi)$.
\end{proof}

The next proposition reduces the proof of the Addition Theorem to the case of finitely $\phi$-generated abelian groups of finite rank. 

\begin{proposition}\label{sup->lim}
Let $G$ be an abelian group, $\f\in\End(G)$ and $H$ a $\f$-invariant subgroup of $G$. Denote by $\pi:G\to G/H$ the canonical projection. Then:
\begin{itemize}
\item[(a)] $\mathfrak F(H, \phi\restriction_H)=\{L \cap H:L\in\mathfrak F(G,\phi)\}$ and $\mathfrak F(G/H, \overline\phi)=\{\pi(L) :L\in\mathfrak F(G,\phi)\}$;
\item[(b)] there exists a chain $\{L_n\}_{n\in\N}$ in $\mathfrak F(G,\phi)$ such that
\begin{equation}\label{Nov.8}
h(\f) = \lim_{n\to \infty}h(\f\restriction_{L_n}),\ h(\f\restriction_H) = \lim_{n\to \infty}h(\f\restriction_{L_n\cap H})  \ \text{and} \ h(\overline\f)=\lim_{n\to\infty}h(\overline\f\restriction_{\pi(L_n)});
\end{equation}
\item[(c)] if $\AT_h(N,\f,N\cap H)$ for all $N\in \mathfrak F(G,\phi)$, then $\AT_h(G,\f,H)$;
\item[(d)] if $\AT_h(N,\f,N\cap H)$ for all $N\in \mathfrak F(G,\phi)$ of finite rank, then $\AT_h(G,\f,H)$. 
\end{itemize}
\end{proposition}
\begin{proof}
(a) The first equality follows from Lemma \ref{fin-gen+noet->fin-gen}(a) and the second is clear.

\smallskip
(b) By Lemma \ref{reduction-to-countable}, there exist $S\in\mathcal S(G,\f)$, $S_1\in\mathcal S(H,\phi\restriction_H)$ and $S_2\in\mathcal S(G/H,\overline\f)$. We can assume without loss of generality that $S\supseteq S_1$ and $\pi(S)\supseteq S_2$. Then $\pi(S)\in\mathcal S(G/H,\overline\phi)$. Moreover, $K :=\ker \pi\!\! \restriction_S = S \cap H \supseteq  S_1$, $K\in\mathcal S(H,\phi\restriction_H)$, and $K$ is a $\phi$-invariant subgroup of $S$ such that $S/K\cong\pi(S)$. Let $\overline{\phi\restriction_S}\in\End(S/K)$, which is conjugated to $\overline\phi\restriction_{\pi(S)}$. By hypothesis and by Proposition \ref{properties}(a),
\begin{equation}\label{--}
h(\phi) =h(\phi\restriction_S) ,\  h(\f\restriction_H) = h(\f\restriction_{K})\ \text{and}\ h(\overline\f)=h(\overline\f\restriction_{\pi(S)})= h(\overline{\phi\restriction_S}).
\end{equation}

Let $S=\{g_n:n\in\N\}$, and for every $n\in\N$ let $F_n=\{g_0,\ldots,g_n\}$. Then $S$ is increasing union of the subsets $F_n$ and consequently of the subgroups $L_n= V(\f,F_n)$, $n\in\N$. For every $F\in[S]^{<\omega}$ there exists $n\in\N$ such that $F\subseteq F_n$, hence $V(\phi,F)\subseteq L_n$ and this shows that the countable chain $\{L_n\}_{n\in\N}$ is cofinal in $\mathfrak F(S,\phi)$. Since $L_n\cap K=L_n\cap H$ for every $n\in\N$, it follows from (a) that $\{L_n\cap H\}_{n\in\N}$ is a countable cofinal chain in $\mathfrak F(K, \phi\restriction_{K})$; moreover, $\{\pi(L_n)\}_{n\in\N}$ is a countable cofinal chain in $\mathfrak F(\pi(S), \overline\phi\restriction_{\pi(S)})$.

By the cofinality of $\{L_n\}_{n\in\N}$ in $\mathfrak F(S,\phi)$, and by Lemma \ref{remark-G_F-gen}, one has $h(\f\restriction_S) = \sup_{n\in\N}h(\f\restriction_{L_n})$.  Since $\{h(\f\restriction_{L_n}):n\in\N\}$ is a non-decreasing sequence, this supremum becomes a limit $h(\f\restriction_S) = \lim_{n\to\infty}h(\f\restriction_{L_n})$.
Analogously, $h(\phi\restriction_{K})=\lim_{n\to\infty}h(\phi\restriction_{L_n\cap K})$ and $h(\overline \phi\restriction_{\pi(S)})=\lim_{n\to\infty}h(\phi\restriction_{\pi(L_n)})$. Now the required equalities follow from \eqref{--}.

\smallskip
(c) According to the hypothesis, for every $N\in\mathfrak F(G,\phi)$,
$$ h(\f\restriction_{N})=h(\f\restriction_{N\cap H})+h(\overline{\f\restriction_N}),$$ where $\overline{\f\restriction_{N}} \in \End(N/(N\cap H))$ is the induced endomorphism. In view of \eqref{restr-to-pi},
\begin{equation}\label{lig4}
h(\overline{\f\restriction_{N}})=h(\overline\f\restriction_{N/(N\cap H)}).
\end{equation}
By item (b), there exists a chain $\{L_n\}_{n\in\N}$ in $\mathfrak F(G,\phi)$ with \eqref{Nov.8}. Moreover, \eqref{lig4} (applied to $N=L_n$) entails $$h(\overline\f)=\lim_{n\to\infty}h(\overline\f\restriction_{\pi(L_n)})=\lim_{n\to\infty}h(\overline{\f\restriction_{L_n}}).$$
As $L_n \in \mathfrak F(G,\phi)$, one has 
$$h(\f)=\lim_{n\to\infty}h(\f\restriction_{L_n})=\lim_{n\to \infty}h(\f\restriction_{L_n\cap H})+\lim_{n\to \infty}h(\overline{\f\restriction_{L_n}})=h(\f\restriction_{H})+h(\overline\f),$$ that is, $\AT_h(G,\phi,H)$.

\smallskip
(d) If $r(V(\f,g))$ is infinite for some $g\in G$, then $\AT_h(G,\f,H))$ by Lemma \ref{CorollaryNov8:1}. Hence, we can assume that $r(V(\f,g))$ is finite for all $g\in G$. Then $r(N)$ is finite for all $N\in \mathfrak F(G,\phi)$. Hence, our hypothesis entails $\AT_h(N,\f,N\cap H))$ for all $N\in \mathfrak F(G,\phi)$. Now (c) yields $\AT_h(G,\f,H)$. 
\end{proof}

\section{The Addition Theorem}\label{AT-sec}

The aim of this section is to give a complete proof of the Addition Theorem.

\subsection{Skew products}\label{3-sec}

Let $K$ and $H$ be abelian groups, and let $\phi_1\in \End(K)$ and $\phi_2\in \End(H)$. The direct product $\pi = \phi_1 \times \phi_2: K\times H \to K \times H$ is defined by $\pi(x,y) = (\phi_1(x), \phi_2(y))$ for every pair $(x,y) \in K\times H$. For a homomorphism $s: K \to H$, the \emph{skew product of $\phi_1$ and $\phi_2$ via} $s$ is $\phi\in \End(K\times H)$ defined by 
\begin{equation*}
\phi(x,y)=(\phi_1(x),\phi_2(y)+s(x))\mbox{ for every }(x,y)\in K\times H.
\end{equation*} 
We say that the homomorphism $s$ is \emph{associated} to the skew product $\phi$.

When $s=0$ one obtains the usual direct product endomorphism $\phi = \pi = \phi_1\times \phi_2$. 

\smallskip
Identifying $H$ with the  $\phi$-invariant subgroup $0\times H$ of $K\times H$, the endomorphism induced by $\phi$ on $K \cong (K \times H)/H$ is precisely $\phi_1$. Let us see that the skew products arise precisely in such a circumstance.

\begin{remark}\label{rem-skew}
If $G$ is an abelian group and $\f\in\End(G)$, suppose to have a $\f$-invariant subgroup $H$ of $G$ that splits as a direct summand, that is $G=K\times H$ for some subgroup $K$ of $G$. Let us see that $\f$ is a skew product. Indeed, let $\iota: G/H\to K $ be the natural isomorphism and let $\overline\f\in \End(G/H)$. Denote by $\f_1:K \to K$ the endomorphism $\phi_1=\iota\overline\phi\iota^{-1}$ of $K$, and let $\f_2=\f\restriction_H$. It follows from the definition of $\f_1$ that  for every $x\in K$ there exists an element $s_\f(x)\in H$ such that $\f(x,0) - (\f_1(x), 0) = (0, s_\phi(x))$. The map $s_\f:K\to H$ is a homomorphism with  $\f(x,0) = (\f_1(x), s_\phi(x))$ for every $x\in K$. Hence, $\f(x,y)=(\f_1(x),\f_2(y)+s_\f(x))$ for every $(x,y)\in G=K\times H$. Therefore, $\f$ is the skew product of $\f_1$ and $\f_2$ via $s_\f$.
\end{remark}

As the next example shows, a natural instance to this effect are the fully invariant subgroups. 

\begin{example}\label{skewex}
 If $G$ is a abelian group and $\f\in\End(G)$, then $t(G)$ is fully invariant, so necessarily $\f$-invariant. 
According to Remark \ref{rem-skew}, $\f$ is a skew product when $t(G)$ splits in $G$, that is, $G = K \times t(G)$ where $K$ is a torsion-free subgroup of $G$.
\end{example}

In the sequel, for a skew product $\phi: G=K\times H \to K \times H$, we denote by $\phi_1:K\to K$ the endomorphism of $K$ conjugated to $\overline\phi$, and we let $\phi_2=\phi\restriction_H$. 
We refer to $\pi_\phi = \phi_1 \times \phi_2$ as the \emph{direct product associated to the skew product} $\phi$. We can extend to $G=K\times H$ the homomorphism $s_\phi:K\to H$ associated to the skew product, defining it to be $0$ on $H$. This allows us to consider $s_\phi\in \End(G\times H)$ and to speak of the composition $s_\phi^2 =0$, as well as $\phi=\pi_\phi+s_\phi$ in the ring $\End(G)$. In other words, the difference $s_\phi=\phi-\pi_\phi$ measures how much the skew product $\phi$ fails to coincide with its associated direct product $\pi_\phi$. 

\begin{proposition}\label{AT-semi-poor}
Let $G$ be an abelian group and $\f\in\End(G)$. Assume that $G=K\times T$ for some subgroups $K$ and $T$ of $G$, with $T$ torsion and $\f$-invariant, and suppose that $\phi$ is a skew product such that $s_\f(K)$ is finite. Then:
\begin{itemize}
\item[(a)] for every $n\in\N_+$ we have $T_n(\pi_\f,F_1\times F_2)= T_n(\f,F_1\times F_2)$ for every $F_1\in[K]^{<\omega}$ and $F_2\in[T]^{<\omega}$ with $F_2$ a subgroup of $T$ containing $s_\phi(K)$;
\item[(b)] $h(\f)=h(\pi_\f)$, consequently $\AT_h(G,\f,T)$.
\end{itemize}
\end{proposition}
\begin{proof}
(a) We have $\pi_\f^n(F_1\times F_2)=\f_1^n(F_1)\times \f_2^n(F_2)$ and so $T_n(\pi_\f,F_1\times F_2)=T_n(\f_1,F_1)\times T_n(\f_2,F_2)$ for every $n\in\N_+$. 

One can prove by induction that, for every $x\in K$ and every $n\in\N_+$,
\begin{equation*}
\f^n(x,0)=(\f_1^n(x),\f_2^{n-1}(s_\f(x))+\f_2^{n-2}(s_\f(\f_1(x)))+\ldots+\f_2(s_\f(\f_1^{n-2}(x)))+s_\f(\f_1^{n-1}(x))).
\end{equation*}
Since $s_\phi(K) \subseteq F_2$, we conclude that 
\begin{equation*}
\f^n(x,0)\in (\f_1^n(x), 0) + (0\times(\f_2^{n-1}(F_2)+\f_2^{n-2}(F_2)+\ldots+\f_2(F_2)+F_2))=
(\f_1^n(x), 0)+ (0\times T_n(\f_2,F_2));
\end{equation*}
as $0\times T_n(\f_2,F_2)= T_n(\f,0\times F_2)$ is a subgroup of $G$, we deduce that
\begin{equation}\label{eq-NEW2}
\f^n(x,0)\in (\f_1^n(x), 0)+ (0\times T_n(\f_2,F_2)) \mbox{ and }  (\f_1^n(x), 0) \in \f^n(x,0) + T_n(\f,0\times F_2). 
\end{equation}
Fix $m\in\N$ and an $m$-tuple $a_0, a_1, \ldots, a_{m-1}\in F_1$. Applying the first and the second part of \eqref{eq-NEW2} to $a_n$ for $n=0,1, \ldots , m-1$, we get respectively
\begin{equation}\label{svito1}
\sum_{n=0}^{m-1}\f^n(a_n,0)\in \sum_{n=0}^{m-1}(\f_1^n(a_n), 0)+(0\times T_m(\f_2,F_2)) \subseteq T_m(\f_1,F_1)\times T_m(\f_2,F_2)
\end{equation}
and 
\begin{equation}\label{svito2}
\sum_{n=0}^{m-1}(\f_1^n(a_n), 0) \in \sum_{n=0}^{m-1} \f^n(a_n,0) + T_m(\f,0\times F_2) \subseteq T_m(\f,F_1\times0) + T_m(\f,0\times F_2). 
\end{equation}
In other words, \eqref{svito1} and \eqref{svito2} yield
\begin{equation}\label{svito3}
T_m(\f,F_1\times 0) \subseteq T_m(\f_1,F_1)\times T_m(\f_2,F_2)   \mbox{ and } 
T_m(\f_1,F_1)\times 0 \subseteq T_m(\f,F_1\times 0) + T_m(\f,0\times F_2).
\end{equation}
As $0\times T_m(\f_2,F_2)= T_m(\f,0\times F_2)$ is a subgroup of $G$, the two parts of \eqref{svito3} give respectively
\begin{equation*}
T_m(\f,F_1\times F_2)= T_m(\f,F_1\times 0) + T_m(\f,0\times F_2) \subseteq T_m(\f_1,F_1)\times T_m(\f_2,F_2)  = T_m(\pi_\f,F_1\times F_2)
\end{equation*}
and
\begin{equation*}
T_m(\pi_\f,F_1\times F_2) = T_m(\f_1,F_1)\times T_m(\f_2,F_2) \subseteq T_m(\f,F_1\times 0) + T_m(\f,0\times F_2) =T_m(\f,F_1\times F_2).
\end{equation*}
These two inclusions prove the required equality $T_n(\pi_\f,F_1\times F_2)= T_n(\f,F_1\times F_2)$.

\smallskip
(b) Let $F\in[G]^{<\omega}$. Then $F\subseteq F_1\times F_2$, for some $F_1\in[K]^{<\omega}$ and $F_2\in[T]^{<\omega}$. By Remark \ref{0inF}(b), we can assume without loss of generality that $(0,0)\in F_1\times F_2$ and that $F_2$ is a subgroup of $T$ with $F_2\supseteq s_\phi(K)$. The conclusion follows from item (a)  and Remark \ref{0inF}(b).

To prove the second assertion of (b), consider $\overline\f\in \End(G/T)$.
By Proposition \ref{properties}(d), $h(\pi_\f)=h(\f_1)+h(\f_2)$. Moreover, by definition, $\phi_2=\phi\restriction_T$ and $\phi_1$ is conjugated to $\overline\phi$, so $h(\overline\phi)=h(\phi_1)$ by Proposition \ref{properties}(a). 
\end{proof}

Now we show that the Addition Theorem holds with respect to the torsion subgroup.

\begin{proposition}\label{AT-t(G)}
Let $G$ be an abelian group and $\f\in\End(G)$. Then $\AT_h(G,\f,t(G))$.
\end{proposition}
\begin{proof} 
By Proposition \ref{sup->lim}(d), we can assume that $r(G)$ is finite and $G$ is finitely $\phi$-generated.
By Lemma \ref{fin-gen+noet->fin-gen}(b) there exists $m\in\N_+$ such that $m\cdot t(G)=0$ and there exists a torsion-free subgroup $K$ of $G$ such that $G\cong K\times t(G)$.
Since $t(G)$ is a $\phi$-invariant subgroup of $G$ that splits, by Remark \ref{rem-skew} this gives rise to a skew product, that is, there exists a homomorphism $s_\f: K\to t(G)$ such that $\f(x,y)=(\f_1(x),\f_2(y)+s_\f(x))$ for every $(x,y)\in G$; moreover, $\f_1\in \End(K)$ is conjugated to $\overline\f \in \End(G/t(G))$ by the isomorphism $K\cong G/t(G)$, and $\f_2=\f\restriction_{t(G)}$. 

To show that $s_\f(K)$ is finite, we notice first that $K/mK$ is finite by \cite[Theorem 0.1]{A}, since $K$ is torsion-free of finite rank. As  $s_\phi(mK)=m s_\phi(K)\subseteq m t(G)=0$, $s_\phi$  factorizes through the canonical projection $\pi:K\to K/mK$, i.e., there exists a homomorphism $\psi:K/mK\to t(G)$ such that $s_\phi=\psi\circ\pi$.  Therefore, $s_\phi(K)=\psi(K/mK)$ is finite.

Now Proposition \ref{AT-semi-poor} applies and gives $\AT_h(G,\phi,t(G))$.
\end{proof}

\subsection{Addition Theorem with respect to two invariant subgroups}\label{rel-sec}

In the sequel $\wedge$ stays for conjunction.

\begin{proposition}\label{6carte}
Let $G$ be an abelian group, $\f\in\End(G)$ and $H,K$ two $\f$-invariant subgroups of $G$. Then
$$\AT_h(G,\f,K) \wedge \AT_h(H,\f\restriction_H,H\cap K) \wedge \AT_h(K,\f\restriction_K,H\cap K) \wedge $$
$$ \wedge \AT_h(G/H,\overline\f_H,(H+K)/H) \wedge \AT_h(G/K,\overline\f_K,(H+K)/K) \Longrightarrow \AT_h(G,\f,H).$$
\end{proposition}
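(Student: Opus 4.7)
The plan is to introduce four auxiliary quantities and reduce the equality $\AT_h(G,\f,H)$ to pure bookkeeping on the five hypotheses. Explicitly, set
$$
a = h(\f\restriction_{H\cap K}), \quad
b = h(\overline{\f\restriction_H}), \quad
c = h(\overline{\f\restriction_K}), \quad
d = h(\psi),
$$
where $\overline{\f\restriction_H}$ and $\overline{\f\restriction_K}$ are the endomorphisms induced on $H/(H\cap K)$ and $K/(H\cap K)$ respectively, and $\psi$ is the endomorphism induced by $\f$ on the double quotient $G/(H+K)$.

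First I would invoke the standard isomorphism theorems to identify $H/(H\cap K)\cong (H+K)/K$, $K/(H\cap K)\cong (H+K)/H$, and $(G/H)/((H+K)/H) \cong G/(H+K) \cong (G/K)/((H+K)/K)$. In each case the induced endomorphisms are conjugated through the isomorphism, so Proposition \ref{properties}(a) together with the observation \eqref{restr-to-pi} yields
$$
b = h(\overline{\f}_K\restriction_{(H+K)/K}), \quad
c = h(\overline{\f}_H\restriction_{(H+K)/H}), \quad
d = h(\overline{\overline{\f}_H}) = h(\overline{\overline{\f}_K}).
$$

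With these identifications, the five hypotheses translate into the equations
$h(\f\restriction_H) = a+b$ (from $\AT_h(H,\f\restriction_H,H\cap K)$),
$h(\f\restriction_K) = a+c$ (from $\AT_h(K,\f\restriction_K,H\cap K)$),
$h(\overline{\f}_H) = c+d$ (from $\AT_h(G/H,\overline{\f}_H,(H+K)/H)$),
$h(\overline{\f}_K) = b+d$ (from $\AT_h(G/K,\overline{\f}_K,(H+K)/K)$),
and $h(\f) = h(\f\restriction_K) + h(\overline{\f}_K) = a+b+c+d$ (from $\AT_h(G,\f,K)$). Summing the first and third of these gives
$h(\f\restriction_H) + h(\overline{\f}_H) = a+b+c+d = h(\f)$,
which is exactly $\AT_h(G,\f,H)$.

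The only real obstacle is the careful verification that the induced endomorphisms are conjugated through the relevant isomorphism-theorem identifications, so that Proposition \ref{properties}(a) applies; this is a routine but necessary check. Once it is in place, the rest is a single arithmetic identity in $\R_+\cup\{\infty\}$, unaffected by possibly infinite values since every operation involved is an addition (no cancellations or subtractions are needed).
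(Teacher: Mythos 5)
Your proposal is correct and is essentially the paper's own proof: the same five equalities from the hypotheses, the same identifications $H/(H\cap K)\cong (H+K)/K$, $K/(H\cap K)\cong (H+K)/H$ and $(G/H)/((H+K)/H)\cong (G/K)/((H+K)/K)$ via \eqref{restr-to-pi} and conjugation-invariance, and the same additive bookkeeping (the paper just carries the explicit entropy expressions through a chain of equalities instead of naming them $a,b,c,d$). Your closing remark that no subtraction is needed, so infinite values cause no trouble, is a point worth making and is implicit in the paper's chain of equalities.
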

\begin{proof}
The situation is described by the following diagrams involving the triples $(G,\phi,H)$ and $(G/K,\overline\phi_K, (H+K)/K)$.
\begin{equation*}
\xymatrix@!R{
*+++{H} \ar[r]^{\phi\restriction_H} \ar@{>->}[d] & *+++{H}\ar@{>->}[d] & & *++{(H+K)/K}\ar[r]^{\overline\f_K\restriction_{(H+K)/K}}\ar@{>->}[d] & *++{(H+K)/K}\ar@{>->}[d] \\
G \ar[r]^{\phi} \ar@{->>}[d] & G \ar@{->>}[d] & & G/K \ar[r]^{\overline\f_K} \ar@{->>}[d] & G/K \ar@{->>}[d] \\
G/H \ar[r]^{\overline\f_H} & G/H & & (G/K)/((H+K)/K) \ar[r]^{\overline{\overline\f_K}} & (G/K)/((H+K)/K)
}
\end{equation*}
Our hypotheses imply that:
\begin{itemize}
\item[(i)]$h(\f)=h(\f\restriction_K)+h(\overline\f_K)$;
\item[(ii)]$h(\f\restriction_H)=h(\f\restriction_{H\cap K})+h(\overline{\f\restriction_H})$;
\item[(iii)]$h(\f\restriction_K)=h(\f\restriction_{H\cap K})+h(\overline{\f\restriction_K})$;
\item[(iv)]$h(\overline\f_H)=h(\overline\f_H\restriction_{(H+K)/H})+h(\overline{\overline\f_H})$;
\item[(v)]$h(\overline\f_K)=h(\overline\f_K\restriction_{(H+K)/K})+h(\overline{\overline\f_K})$.
\end{itemize}
The composition of the isomorphisms $(G/K)/((H+K)/K)\cong G/(H+K) \cong (G/H)/((H+K)/H)$ commutes with 
$$
\overline{\overline\f_K}:(G/K)/((H+K)/K)\to(G/K)/((H+K)/K)\mbox{ and }\overline{\overline\f_H}:(G/H)/((H+K)/H)\to (G/H)/((H+K)/H),
$$ i.e., $\overline{\overline\f_K}$ and $\overline{\overline\f_H}$ are conjugated. So Proposition \ref{properties}(a) yields
\begin{equation}\label{hbar=hbar}
h(\overline{\overline\f_H})=h(\overline{\overline\f_K}),
\end{equation}
Moreover, by \eqref{restr-to-pi}, we have
\begin{equation}\label{hover=hbar}
h(\overline{\f\restriction_H})=h(\overline\f_K\restriction_{H+K/K})\ \text{and}\ h(\overline{\f\restriction_K})=h(\overline\f_H\restriction_{H+K/H}).
\end{equation}
From (i), (iii), (v), \eqref{hbar=hbar}, \eqref{hover=hbar}, (ii) and (iv), we deduce 
\begin{align*}
h(\f)&=h(\f\restriction_K)+h(\overline\f_K)\\
&=(h(\f\restriction_{H\cap K})+h(\overline{\f\restriction_K}))+(h(\overline\f_K\restriction_{(H+K)/K})+h(\overline{\overline\f_K}))\\
&=h(\f\restriction_{H\cap K})+h(\overline\f_H\restriction_{H+K/H})+h(\overline{\f\restriction_H})+h(\overline{\overline\f_H})\\
&=(h(\f\restriction_{H\cap K})+h(\overline{\f\restriction_H}))+(h(\overline\f_H\restriction_{H+K/H})+h(\overline{\overline\f_H}))\\
&=h(\f\restriction_H)+h(\overline\f_H).
\end{align*}
Therefore, $h(\f)=h(\f\restriction_H)+h(\overline\f_H)$.
\end{proof}

The following corollary can be deduced easily from Proposition \ref{6carte}.

\begin{corollary}\label{3carte}
Let $G$ be an abelian group, $\f\in\End(G)$ and $H, K$ two $\f$-invariant subgroups of $G$ with $H\subseteq K$. Then:
\begin{itemize}
\item[(a)]$\AT_h(G,\f,H) \wedge \AT_h(K,\f\restriction_K,H)\wedge \AT_h(G/H,\overline \f, K/H) \Longrightarrow \AT_h(G,\f,K)$; and
\item[(b)]$\AT_h(G,\f,K) \wedge \AT_h(K,\f\restriction_K,H)\wedge \AT_h(G/H,\overline \f, K/H) \Longrightarrow \AT_h(G,\f,H)$.
\end{itemize}
\end{corollary}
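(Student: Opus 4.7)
The plan is to observe that both parts reduce to straightforward bookkeeping with the three hypothesized Addition Theorems once the relevant second-level quotient endomorphisms are identified up to conjugation. Because $H\subseteq K$, the lattice data simplifies: $H\cap K=H$ and $H+K=K$. Moreover, there are two canonical identifications that will drive the whole argument:
\begin{itemize}
\item the restriction $\overline{\f}_H\restriction_{K/H}:K/H\to K/H$ coincides with the endomorphism $\overline{\f\restriction_K}:K/H\to K/H$ induced by $\f\restriction_K$, so by Proposition \ref{properties}(a) they have the same algebraic entropy;
\item the third Isomorphism Theorem gives $(G/H)/(K/H)\cong G/K$, and under this isomorphism $\overline{\overline{\f}_H}$ is conjugated to $\overline\f_K$, so again by Proposition \ref{properties}(a) they have the same algebraic entropy.
\end{itemize}

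For part (b), I would simply specialize Proposition \ref{6carte} to the present configuration. With $H\subseteq K$, the hypothesis $\AT_h(H,\f\restriction_H,H\cap K)$ becomes $\AT_h(H,\f\restriction_H,H)$, which is the trivial identity $h(\f\restriction_H)=h(\f\restriction_H)+0$; similarly $\AT_h(G/K,\overline\f_K,(H+K)/K)=\AT_h(G/K,\overline\f_K,0)$ is trivial. The remaining three hypotheses of Proposition \ref{6carte} are exactly the three hypotheses of (b), so Proposition \ref{6carte} delivers the conclusion.

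For part (a), Proposition \ref{6carte} is not directly applicable, so I would carry out the computation by hand. Expand the three hypotheses as
\begin{align*}
h(\f)&=h(\f\restriction_H)+h(\overline\f_H), \\
h(\f\restriction_K)&=h(\f\restriction_H)+h(\overline{\f\restriction_K}), \\
h(\overline\f_H)&=h(\overline\f_H\restriction_{K/H})+h(\overline{\overline\f_H}).
\end{align*}
Substitute the third identity into the first, and then apply the two conjugacy identifications above to rewrite $h(\overline\f_H\restriction_{K/H})=h(\overline{\f\restriction_K})$ and $h(\overline{\overline\f_H})=h(\overline\f_K)$, obtaining
$$
h(\f)=h(\f\restriction_H)+h(\overline{\f\restriction_K})+h(\overline\f_K).
$$
Finally, use the second hypothesis to collapse $h(\f\restriction_H)+h(\overline{\f\restriction_K})=h(\f\restriction_K)$, which yields $h(\f)=h(\f\restriction_K)+h(\overline\f_K)$, i.e.\ $\AT_h(G,\f,K)$.

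There is no real obstacle beyond the purely formal check that the two conjugacies above are correct (which amounts to unwinding the definition of the induced endomorphism on successive quotients); everything else is additive algebra in $\R_+\cup\{\infty\}$. It is worth emphasizing that (a) and (b) together express the expected ``transitivity'' of the Addition Theorem along a tower $H\subseteq K\subseteq G$: knowing any two of the three Addition Theorems (for $H$ in $G$, $K$ in $G$, $H$ in $K$), together with the corresponding theorem for $K/H$ in $G/H$, determines the third.
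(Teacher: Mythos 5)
Your proof is correct. The paper gives no explicit proof of this corollary; the intended argument is that both parts are immediate specializations of Proposition \ref{6carte}, using $H\cap K=H$ and $H+K=K$ and the fact that the triples $(H,\f\restriction_H,H)$ and $(G/K,\overline\f_K,0)$ satisfy the Addition Theorem trivially. Your treatment of (b) is exactly this. For (a) you claim Proposition \ref{6carte} is ``not directly applicable'' and redo the additive bookkeeping by hand; in fact it \emph{is} directly applicable once you exchange the roles of $H$ and $K$ (the proposition is stated for an arbitrary pair of $\f$-invariant subgroups, and with $K'=H$, $H'=K$ its five hypotheses reduce to the three hypotheses of (a) plus the two trivial ones, with conclusion $\AT_h(G,\f,K)$). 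Your hand computation is nevertheless valid: the two conjugacy identifications you invoke are exactly \eqref{restr-to-pi} and the third-isomorphism-theorem conjugation used in the proof of Proposition \ref{6carte}, and since all manipulations are substitutions of equal quantities involving only addition, they remain valid in $\R_+\cup\{\infty\}$. So the only thing you lose by the hand computation is economy; nothing is gained or lost in generality.
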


Proposition \ref{6carte} and Corollary \ref{3carte} have the following useful consequences, that are applied below in the proof of the Addition Theorem.

\begin{lemma}\label{lem1.9}\label{cor0}
Let $G$ be an abelian group, $\f\in\End(G)$ and $H$ a $\f$-invariant subgroup of $G$. Then $\AT_h(G,\phi,H)$ if one of the following conditions holds:
\begin{itemize}
\item[(a)] $G/H$ is torsion;
\item[(b)] $H$ is torsion.
\end{itemize}
\end{lemma}
\begin{proof} 
(a) Our aim is to apply Proposition \ref{6carte} with $K=t(G)$, so we check that its hypotheses are satisfied: 
\begin{itemize}
\item[(i)] $\AT_h(G,\f,t(G))$ and $\AT_h(H,\f\restriction_H,t(H))$, according to Proposition \ref{AT-t(G)}; 
\item[(ii)] $\AT_h(t(G),\f\restriction_{t(G)},t(H))$ and  $\AT_h(G/H,\overline\f_H,(H+t(G))/H)$, because $t(G)$ and $G/H$ are torsion, so
  the Addition Theorem for torsion abelian groups from \cite{DGSZ} applies;
\item[(iii)] $\AT_h(G/t(G),\overline\f_{t(G)},(H+t(G))/t(G))$ by Corollary \ref{AT-tf-ess}, as $G/t(G)$ is torsion-free and $(H+t(G))/t(G)$ is essential in $G/t(G)$, being $G/(H+t(G))$ torsion as a quotient of the torsion abelian group $G/H$.
\end{itemize}
As $t(H) = H \cap t(G)$,  Proposition \ref{6carte} applies to conclude the proof.

\smallskip
(b) The subgroup $t(G)/H$ of $G/H$ is precisely $t(G/H)$, so both $\AT_h(G/H,\overline \f, t(G)/H)$ and $\AT_h(G,\f, t(G))$ by Proposition \ref{AT-t(G)}. On the other hand, $\AT_h(t(G),\f\restriction_{t(G)}, H)$ as $t(G)$ is torsion (apply the Addition Theorem for the torsion abelian groups from \cite{DGSZ}). Now Corollary \ref{3carte}(b) applies to the triple $H\subseteq t(G)\subseteq G$.
\end{proof}

\subsection{The torsion-free case}\label{sec-tf}

If the abelian group $G$ is torsion-free, then for any $\phi\in\End(G)$ the subgroup $\ker_\infty \f$ is pure in $G$.
The next result reduces the computation of the algebraic entropy of endomorphisms of finite-rank torsion-free abelian groups to the case of \emph{injective} ones.

\begin{proposition}\label{red-to-inj}
Let $G$ be a torsion-free abelian group of finite rank and $\f\in\End(G)$. Then $h(\f) = h(\overline\f_{\ker_\infty\f})$. Consequently, $\AT_h(G,\f,\ker_\infty\f)$.
\end{proposition}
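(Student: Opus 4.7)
The plan is to first observe that once we prove $h(\f) = h(\overline\f)$, the Addition Theorem follows immediately: Corollary \ref{ker_infty} gives $h(\f\restriction_{\ker_\infty\f}) = 0$, so the identity $h(\f) = h(\f\restriction_{\ker_\infty\f}) + h(\overline\f)$ reduces exactly to $h(\f) = h(\overline\f)$. The inequality $h(\f) \geq h(\overline\f)$ is free from Lemma \ref{restriction_quotient}, so only the converse needs attention.

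The first step is to verify that $\ker_\infty\f$ is a pure subgroup of $G$: if $x \in G$ and $mx \in \ker_\infty\f$ for some $m \in \N_+$, then $m\f^n(x) = 0$ for some $n \in \N_+$, and since $G$ is torsion-free this forces $\f^n(x) = 0$, i.e., $x \in \ker_\infty\f$. This makes Corollary \ref{->D(G)} applicable with $H = \ker_\infty\f$, yielding $h(\f) = h(\widetilde\f)$ and $h(\overline\f) = h(\overline{\widetilde\f})$, where $\overline{\widetilde\f}$ is the endomorphism of $D(G)/D(\ker_\infty\f)$ induced by $\widetilde\f$. A short side check shows $D(\ker_\infty\f) = \ker_\infty\widetilde\f$: the right side is divisible and contains $\ker_\infty\f$, while every $x \in \ker_\infty\widetilde\f$ admits some $m \in \N_+$ with $mx \in G \cap \ker_\infty\widetilde\f = \ker_\infty\f$, so $\ker_\infty\f$ is essential in the divisible group $\ker_\infty\widetilde\f$.

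The problem is thus reduced to showing $h(\widetilde\f) = h(\overline{\widetilde\f})$ for the $\Q$-linear endomorphism $\widetilde\f$ of the finite-dimensional $\Q$-vector space $D(G) \cong \Q^{r(G)}$, where $\overline{\widetilde\f}$ is induced on the quotient by $\ker_\infty\widetilde\f$. I would invoke the Fitting decomposition from linear algebra: for $n$ sufficiently large one has $D(G) = \ker_\infty\widetilde\f \oplus W$ with $W = \Im\,\widetilde\f^n$, both summands are $\widetilde\f$-invariant, $\widetilde\f\restriction_{\ker_\infty\widetilde\f}$ is nilpotent, and $\widetilde\f\restriction_W$ is an automorphism. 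Then Proposition \ref{properties}(d) gives $h(\widetilde\f) = h(\widetilde\f\restriction_{\ker_\infty\widetilde\f}) + h(\widetilde\f\restriction_W)$; the first summand vanishes by Corollary \ref{ker_infty}; and $\widetilde\f\restriction_W$ is conjugate to $\overline{\widetilde\f}$ via the restriction of the canonical projection $D(G) \to D(G)/\ker_\infty\widetilde\f$ to the complementary summand $W$, so Proposition \ref{properties}(a) yields $h(\widetilde\f\restriction_W) = h(\overline{\widetilde\f})$. Combining these three equalities gives $h(\widetilde\f) = h(\overline{\widetilde\f})$, which as observed completes the proof.

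I do not foresee a serious obstacle here: the only delicate point is producing an $\widetilde\f$-invariant complement to $\ker_\infty\widetilde\f$, but this is standard Fitting theory for a linear endomorphism of a finite-dimensional $\Q$-vector space, and the reduction to this setting is exactly what Corollary \ref{->D(G)} was designed to deliver.
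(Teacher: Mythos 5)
Your proof is correct and follows essentially the same route as the paper's: reduce to the divisible hull via Proposition \ref{AA} and Corollary \ref{->D(G)} (checking $\ker_\infty\widetilde\f=D(\ker_\infty\f)$ exactly as the paper does), split off the hyperkernel as a $\widetilde\f$-invariant direct summand, and combine Corollary \ref{ker_infty} with Proposition \ref{properties}(a),(d). The only cosmetic difference is that you invoke the Fitting decomposition $D(G)=\ker_\infty\widetilde\f\oplus\Im\,\widetilde\f^{\,n}$ in one step, whereas the paper first replaces $\f$ by a power $\gamma=\f^{n}$ (using $h(\f^{n})=nh(\f)$) so that $\ker_\infty\gamma=\ker\gamma$ and then uses the splitting $D\cong\ker\gamma\times\gamma(D)$.
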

\begin{proof}
We prove that $h(\f) = h(\overline\f)$, then this implies immediately $\AT_h(G,\f,\ker_\infty\f)$, since $h(\phi\restriction_{\ker_\infty\phi})=0$ by Proposition \ref{ker_infty}.

Suppose first that $G$ is divisible. Since $G$ has finite rank, $\ker_\infty\f$ has finite rank as well. Then there exists $n\in\N$, such that $\ker_\infty\f=\ker\f^n$.
Let $\gamma=\f^n$ and $\overline\gamma\in\End(G/\ker\gamma)$ the endomorphism induced by $\gamma$. Then $h(\gamma)=n h(\f)$ by Proposition \ref{properties}(b). Since $\overline\gamma=\overline\f^n$, it follows that $h(\overline\gamma)=n h(\overline\f)$ again by Proposition \ref{properties}(b). So, if we prove that $h(\gamma)=h(\overline\gamma)$, it follows that $h(\f)=h(\overline\f)$. Note that $\overline\gamma^2(G) = \overline\gamma(G)$. 

This shows that we can suppose without loss of generality that $\ker_\infty\f=\ker\f$ and $\f^2(G) = \f(G)$. Let $\overline\phi\in \End(G/\ker\phi)$. From \cite[Section 58, Theorem 1]{Halmos2} it follows that  
\begin{equation*}
G\cong\ker\f\times\f(G).
\end{equation*}
Proposition \ref{ker_infty} gives $h(\f\restriction_{\ker\f})=0$ and so $h(\phi)=h(\f\restriction_{\f(G)})$ by Proposition \ref{properties}(d). Since $\f(G)\cong G/\ker\f$, and $\f\restriction_{\f(G)}$ and $\overline\f$ are conjugated by this isomorphism, $h(\f\restriction_{\f(G)})=h(\overline\f)$ by Proposition \ref{properties}(a). Hence, $h(\phi)=h(\overline\phi)$.

\smallskip
We consider now the general case. Since $\ker_\infty\widetilde\phi$ is pure in $D(G)$, it is divisible by Lemma \ref{div<->pure}(b). Moreover, $\ker_\infty\phi$ is essential in $\ker_\infty\widetilde\f$. Indeed, let $x\in\ker_\infty\widetilde\f$, i.e., there exists $n\in\N_+$ such that ${\widetilde\f}^n(x)=0$. Since $G$ is essential in $D(G)$ there exists $k\in\Z$ such that $kx\in G\setminus\{0\}$. Furthermore, $\phi^n(kx)={\widetilde\phi}^n(kx)=k{\widetilde\phi}^n(x)=0$ and so $kx\in\ker_\infty\phi\setminus\{0\}$. It follows that $\ker_\infty \widetilde\phi=D(\ker_\infty\phi)$.
By the first part of the proof, $h(\widetilde\phi)=h(\overline{\widetilde\phi})$, where $\overline{\widetilde\phi}\in \End(D(G)/\ker_\infty\widetilde\phi)$ is the induced endomorphism. By Proposition \ref{AA} $h(\phi)=h(\widetilde\phi)$ and by Corollary \ref{->D(G)} $h(\overline\phi)=h(\overline{\widetilde\phi})$. Hence, $h(\phi)=h(\overline\phi)$.
\end{proof}

Our next aim is to prove the Addition Theorem for torsion-free abelian groups (see Proposition \ref{pure:subgroup}).
We start from the divisible case in Theorem \ref{AT-tf-pure}, where we apply the Algebraic Yuzvinski Formula.

\begin{theorem}\label{AT-tf-pure} 
Let $n\in\N_+$, $\phi\in \End(\Q^n)$ and let $H$ be a pure $\phi$-invariant subgroup of $\Q^n$. Then $h(\phi)<\infty$ and $\AT_h(\Q^n,\phi,H)$.
\end{theorem}
\begin{proof}
Let $D=\Q^n$. Theorem \ref{Yuz}  implies $h(\phi)<\infty$.

\medskip
Assume first that $\phi$ is an automorphism. Let $r(H) = k\in\N$, that is, $H \cong \Q^k$. Let $\mathcal B=\{v_1, \ldots, v_k, v_{k+1}, \ldots, v_n\}$ be a basis of $D$ over $\Q$ such that $\mathcal B_H=\{v_1, \ldots, v_k\}$ is a basis of $H$ over $\Q$. Then the matrix of $\f$ with respect to $\mathcal B$ has the following block-wise form: 
$$
A =\begin{pmatrix} A_1 & B \\ 0 & A_2\end{pmatrix},
$$ 
where $A_1$ is the matrix of $\f\restriction_H$ with respect to $\mathcal B_H$. Let $\pi:D\to D/H$ be the canonical projection and $\overline\phi\in \End(D/H)$ the endomorphism induced by $\phi$. Then $\overline{\mathcal B}=\{\pi(v_{k+1}), \ldots, \pi(v_n)\}$ is a basis of $D/H\cong \Q^{n-k}$ and $A_2$ is the matrix of $\overline\phi$ with respect to $\overline{\mathcal B}$. Let $\alpha_1,\ldots,\alpha_k$ be the eigenvalues of $A_1$ and let $\alpha_{k+1},\ldots,\alpha_{n}$ be the eigenvalues of $A_2$. Then $\alpha_1,\ldots,\alpha_n$ are the eigenvalues of $A$. 

Let $\chi$ and $\chi_1,\chi_2\in\Q[t]$ be the characteristic polynomials of $A$ and $A_1,A_2$ respectively. Then $\chi=\chi_1\chi_2$. 
Let $s_1$ and $s_2$ be the least common multiples of the denominators of the coefficients of $\chi_1$ and $\chi_2$ respectively. This means that $p_1=s_1\chi_1$ and 
$p_2=s_2\chi_2\in\Z[t]$ are primitive. By Gauss Lemma $p= p_1p_2$ is primitive and so for $s=s_1s_2$ the polynomial $p=s\chi \in\Z[t]$ is primitive.
Now the Algebraic Yuzvinski Formula \ref{Yuz} applied to $\phi,\phi\restriction_H,\overline\phi$ gives 
\begin{align*}
h(\f)&= \log s+\sum_{1\leq i\leq n,|\alpha_i|>1}\log|\alpha_i|\\
&=\log(s_1s_2)+\sum_{1\leq i\leq k,|\alpha_i|>1}\log|\alpha_i|+\sum_{k+1\leq i\leq n,|\alpha_i|>1}\log|\alpha_i|\\
&=\left(\log s_1+\sum_{1\leq i\leq k,|\alpha_i|>1}\log|\alpha_i|\right)+\left(\log s_2+\sum_{k+1\leq i\leq n,|\alpha_i|>1}\log|\alpha_i|\right)\\
&=h(\f\restriction_H) + h(\overline \f).
\end{align*}
This proves $\AT_h(\Q^n,\phi,H)$.

\medskip
Consider now the general case of $\phi\in\End(G)$ and let $\pi:D\to D/\ker_\infty\f$ be the canonical projection. Then $\pi(H)=(H+\ker_\infty\f)/\ker_\infty\f$ is a $\overline\f$-invariant pure (i.e., divisible) subgroup of $D/\ker_\infty\f$ and we have the following two diagrams.
\begin{equation*}
\xymatrix@!R{
*+++{H} \ar[r]^{\phi\restriction_H} \ar@{>->}[d] & *+++{H}\ar@{>->}[d] & & *+++{\pi(H)}\ar[r]^{\overline\f\restriction_{\pi(H)}}\ar@{>->}[d] & *+++{\pi(H)}\ar@{>->}[d] \\
D \ar[r]^{\phi} \ar@{->>}[d] & D \ar@{->>}[d] & & D/\ker_\infty\f \ar@{->}[r]^{\overline\f} \ar@{->>}[d] & D/\ker_\infty \f \ar@{->>}[d] \\
D/H \ar[r]^{\overline\f_H} & D/H & & (D/\ker_\infty\f)/\pi(H) \ar[r]^{\overline{\overline\f_{\pi(H)}}} & (D/\ker_\infty\f)/\pi(H)
}
\end{equation*}

To establish $\AT_h(D,\f,H)$ we intend to apply  Proposition \ref{6carte}. To this end  we check in the sequel the validity of its hypotheses:
\begin{itemize}
 \item[(i)] $\AT_h(D,\f,\ker_\infty\f)$, $\AT_h(H,\f\restriction_H,H\cap \ker_\infty\f)$ and $\AT_h(D/H,\overline\f_H,(H+\ker_\infty\f)/H)$ by Proposition \ref{red-to-inj}, as   $H\cap\ker_\infty\f  =\ker_\infty\f\restriction_H$ and $(H+\ker_\infty\f)/H = \ker_\infty\overline\f$; 
 \item[(ii)] $\AT_h(\ker_\infty\f,\f\restriction_{\ker_\infty\f},H\cap \ker_\infty\f)$, as $h(\f\restriction_{\ker_\infty\f})=0$ by Proposition \ref{ker_infty};
 \item[(iii)] $\AT_h(D/\ker_\infty\f,\overline\f,(H+\ker_\infty\f)/\ker_\infty\f)$ by the the first part of the proof (as $\overline\phi$ is bijective, being injective).
\end{itemize}
Now Proposition \ref{6carte} yields that $\AT_h(D,\f,H)$.
\end{proof}

The next result settles the torsion-free case of the Addition Theorem.
 
\begin{proposition}\label{tor_free}\label{pure:subgroup} Let $G$ be a torsion-free abelian group, $\f\in\End(G)$ and let $H$ be a $\phi$-invariant subgroup of $G$. Then $\AT_h(G,\f,H)$.
\end{proposition}
\begin{proof} 
By Proposition \ref{sup->lim}(d), we can assume that $G$ has finite rank. 
The purification $H_*$ of $H$ is $\f$-invariant too. We are going to deduce $\AT_h(G,\f,H)$ from Corollary \ref{3carte}(b) applied to the chain $H \subseteq H_* \subseteq G$. 
The validity of $\AT_h(G,\f,H_*)$ is granted by Theorem \ref{AT-tf-pure} and Corollary \ref{->D(G)}. 
Since $H$ is an essential subgroup of $H_*$, Corollary \ref{CoAA}(a) yields $\AT_h(H_*,\f\restriction_{H_*},H)$. Finally, $\AT_h(G/H,\overline \f,H_*/H)$ follows from Proposition \ref{AT-t(G)}, as $H_*/H= t(G/H)$. 
\end{proof}

Now we can prove the Addition Theorem:

\begin{proof}[\bf Proof of Theorem \ref{AT}]
One has $\AT_h(G, \f, t(G))$ and  $\AT_h(H+t(G), \f \restriction_{H+t(G)}, t(G))$ by Proposition \ref{AT-t(G)}, and $\AT_h(G/t(G), \overline \f, H+t(G)/t(G))$ by Proposition \ref{tor_free}. 
Then $$\AT_h(G,\f,H+t(G))$$ by Corollary \ref{3carte}(a) applied to the triple $t(G)\subseteq H + t(G)\subseteq G$.  
Since $H+t(G)/H$ is torsion, Lemma \ref{lem1.9}(a) yields $$\AT_h(H+t(G), \f \restriction_{H+t(G)}, H).$$
Finally, as the subgroup $(H+t(G))/H$ of $G/H$ is torsion, by Lemma \ref{cor0}(b) $$\AT_h(G/H, \overline \f, (H+t(G))/H).$$
Now Corollary \ref{3carte}(b) applies to the triple $H\subseteq H+t(G)\subseteq G$ to conclude the proof.
\end{proof}

\section{The Uniqueness Theorem}\label{uniq-sec}

We start this section by proving the Uniqueness Theorem for the algebraic entropy $h$ in the category of all abelian groups.
In other words, we have to show that, whenever a collection $h^*=\{h^*_G:G\ \text{abelian group}\}$ of functions $h^*_G:\End(G)\to\R_{\geq0}\cup\{\infty\}$ satisfies (a)--(e) of Theorem \ref{UT}, then $h^*(\phi)=h(\phi)$ for every abelian group $G$ and every 
$\f\in\End(G)$. 

\smallskip
We shall provide two proofs of this fact, but first we need to point out three easy consequences of the hypotheses  (a)--(e) of Theorem \ref{UT} to be used in both proofs. 
\begin{itemize}
\item[(A)] Item (c) implies that $h^*$ is monotone with respect to taking restrictions to invariant subgroups and to taking induced endomorphisms of the quotient groups with respect to invariant subgroups. 
\item[(B)] Item (d) says that $h^*$ and $h$ coincide on all Bernoulli shifts $\beta_K$, where $K$ is a finite abelian group. This can be extended also to $\beta_\Z$, by showing that $h^*(\beta_\Z)=\infty$ as follows. Let $G=\Z^{(\N)}$. 
For every prime $p$, the subgroup $pG$ of $G$ is $\beta_\Z$-invariant, so $\beta_\Z$ induces an endomorphism $\overline{\beta_\Z}:G/pG\to G/pG$. Since $G/pG\cong \Z(p)^{(\N)}$ and $\overline{\beta_\Z}$ is conjugated to $\beta_{\Z(p)}$ through this isomorphism, $h^*(\overline{\beta_\Z})=h^*(\beta_{\Z(p)})=\log p$ by item (a). Therefore, $h^*(\beta_\Z)\geq\log p$ for every prime $p$ by (A), and so $h^*(\beta_\Z)=\infty$.
\item[(C)] Item (e) means that $h^*$ and $h$ coincide on all endomorphisms of $\Q^n$, when $n\in \N_+$ varies. 
\end{itemize}

Roughly speaking, (B) and (C) (that are (d) and (e) of Theorem \ref{UT}, respectively) ensure the coincidence of $h$ and $h^*$ on all endomorphisms of $\Q^n$ for every $n\in \N_+$ and all Bernoulli shifts. We have to show that along with the properties (a), (b) and (c), this forces $h^*$ to coincide with $h$ on all endomorphisms of all abelian groups. 

\begin{proof}[\bf Proof of Theorem \ref{UT}] 
In the sequel $G$ is an arbitrary abelian group, $\f\in \End(G)$ and $H$ a $\f$-invariant subgroup of $G$. To prove that $h^*(\phi)=h(\phi)$ we consider various cases
depending on the group $G$. 

\smallskip
(i) If $G$ is torsion, then $h^*(\phi)=\ent(\phi)=h(\phi)$ by the Uniqueness Theorem for $\ent$ \cite[Theorem 6.1]{DGSZ}. Obviously, item (e) becomes vacuous in this case.

\smallskip
(ii)  It suffices to consider the torsion-free case. Indeed, assume that $h$ and $h^*$ coincide on endomorphisms of torsion-free abelian groups. 
One has  $h^*(\phi)=h^*(\phi\restriction_{t(G)})+h^*(\overline\phi)$ by (c) and $h(\phi)=h(\phi\restriction_{t(G)})+h(\overline\phi)$ by the Addition Theorem  \ref{AT}. Since $G/t(G)$ is torsion-free, our hypothesis implies $h^*(\overline\phi)=h(\overline\phi)$. Since $h^*(\phi\restriction_{t(G)})=h(\phi\restriction_{t(G)})$ by (i), we conclude that $h^*(\phi)=h(\phi)$.

\smallskip
(iii) If $G$ is torsion-free and $r(G)=n$ is finite, then $h^*(\phi)=h(\phi)$.
Indeed, $D(G)\cong\Q^n$ and $D(G)/G$ is torsion. Let $\overline\phi\in \End(D(G)/G)$ be the endomorphism induced by $\widetilde\phi:D(G)\to D(G)$. By the Algebraic Yuzvinski Formula \ref{Yuz} and by (e) $h^*(\widetilde\phi)=h(\widetilde\phi)$, and this value is finite. Since $D(G)/G$ is torsion, $h^*(\overline\phi)=h(\overline\phi)$ by (i). By (c) $h^*(\widetilde\phi)=h^*(\phi)+h^*(\overline\phi)$ and by the Addition Theorem \ref{AT} $h(\widetilde\phi)=h(\phi)+h(\overline\phi)$. Then $h^*(\phi)=h^*(\widetilde\phi)-h^*(\overline\phi)=h(\widetilde\phi)-h(\overline\phi)=h(\phi)$.

\smallskip
(iv) If $G$ is torsion-free, $G=V(\phi,g)$ for some $g\in G$ and $r(G)$ is infinite, then $h^*(\phi)=\infty=h(\phi)$. The second equality follows from Lemma \ref{CorollaryNov8:1}.
To check the first one, note that $G\cong\bigoplus_{n\in\N}\hull{\phi^n(g)}\cong\Z^{(\N)}$ and $\phi$ is conjugated to $\beta_\Z$ through this isomorphism.
Hence, $h^*(\phi)=\infty$, because $h^*(\phi)=h^*(\beta_\Z)$ by (a) and $h^*(\beta_\Z)=\infty$ by (B).

\smallskip
(v) If $G$ is torsion-free and $G=V(\phi,F)$ for some $F\in[G]^{<\omega}$, then $h^*(\phi)=h(\phi)$.
To prove this, let $F=\{f_1,\ldots,f_k\}$. Then $G=V(\phi,f_1)+\ldots+V(\phi,f_k)$.
If $r(V(\phi,f_i))$ is finite for every $i\in\{1,\ldots,k\}$, then $r(G)$ is finite as well, and $h^*(\phi)=h(\phi)$ by (iii).
If $r(V(\phi,f_i))$ is infinite for some $i\in\{1,\ldots,k\}$, then $h^*(\phi\restriction_{V(\phi,f_i)})=\infty=h(\phi\restriction_{V(\phi,f_i)})$ by (iv). By (A) and by Lemma \ref{restriction_quotient}, $h^*(\phi)=\infty=h(\phi)$.

\smallskip
Consider now the general case. By (ii) we can suppose without loss of generality that $G$ is torsion-free. By Lemma \ref{remark-G_F-gen}, $G=\varinjlim\{V(\phi,F):F\in[G]^{<\omega}\}$, and by (v), $h^*(\phi\restriction_{V(\phi,F)})=h(\phi\restriction_{V(\phi,F)})$ for every $F\in[G]^{<\omega}$. Therefore, (b) and Proposition \ref{properties}(c) give $h^*(\phi)=\sup_{F\in[G]^{<\omega}}h^*(\phi\restriction_{V(\phi,F)})=\sup_{F\in[G]^{<\omega}}h(\phi\restriction_{V(\phi,F)})=h(\phi)$.
\end{proof}

Note that the Uniqueness Theorem \cite[Theorem 6.1]{DGSZ} for $\ent$ in the torsion case uses one more axiom that is not present in our list (a)--(e), namely the Logarithmic Law. As shown in \cite{S}, in the torsion case it follows from (a)--(d). We have seen in the Uniqueness Theorem for $h$ that also in the general case the Logarithmic Law is not among the properties necessary to give uniqueness of $h$ in the category of all abelian groups, that is, the Logarithmic Law follows automatically from (a)--(e).

\bigskip
It is possible to prove the Uniqueness Theorem also in a less direct way, that is, using a known theorem by V\'amos \cite{V} on length functions. We expose this alternative proof in the remaining part of the section, considering the algebraic entropy as a function $h:\Mod_{\Z[t]}\to \R_{\geq0}\cup\{\infty\}$ (see Remark \ref{af-mod}).

\smallskip
Let $R$ be a unitary commutative ring, and consider the category $\Mod_R$ of all $R$-modules and their homomorphisms.
An \emph{invariant} $i:\Mod_R\to \R_{\geq0}\cup\{\infty\}$ is a function such that $i(0)=0$ and $i(M)=i(M')$ if $M\cong M'$ in $\Mod_R$. 
For $M\in\Mod_R$, denote by $\mathcal F(M)$ the family of all finitely generated submodules of $M$.

\begin{definition}\emph{\cite{NR,V}}\label{L-def}
Let $R$ be a unitary commutative ring. A \emph{length function} $L$ of $\Mod_R$ is an invariant $L:\Mod_R\to \R_{\geq0}\cup\{\infty\}$ such that:
\begin{itemize}
\item[(a)] $L(M)=L(M')+L(M'')$ for every exact sequence $0\to M'\to M\to M''\to 0$ in $\Mod_R$;
\item[(b)] $L(M)=\sup\{L(F):F\in\mathcal F(M)\}$.
\end{itemize}
\end{definition}

An invariant satisfying (a) is said \emph{additive} and an invariant with the property in (b) is called \emph{upper continuous}.
So a length function is an additive upper continuous invariant of $\Mod_R$.

\begin{proposition}\label{h-lf} 
If $h^*=\{h^*_G:G\ \text{abelian group}\}$ is a collection of functions $h^*_G:\End(G)\to\R_{\geq0}\cup\{\infty\}$ satisfying (a)--(e) of Theorem \ref{UT}, then $h^*$ is a length function of $\Mod_{\Z[t]}$. In particular, $h$ is a length function of $\Mod_{\Z[t]}$.
\end{proposition}
\begin{proof} 
In terms of Definition \ref{L-def}, $h^*$ and $h$ are upper continuous and additive invariants of $\Mod_{\Z[t]}$, respectively in view of the properties (a), (b) and (c) of Theorem \ref{UT}, and by Example \ref{h(id)=0}, the Addition Theorem \ref{AT} and Proposition \ref{properties}(c).
\end{proof}

As noted in \cite{V}, the values of a length function $L$ of $\Mod_R$ are determined by its values on the finitely generated $R$-modules. 
In case $R$ is a Noetherian commutative ring, $L$ is determined by its values $L(R/\mathfrak p)$ for prime ideals $\mathfrak p$ of $R$ (see \cite[Corollary of Lemma 2]{V}).

\begin{proof}[\bf Second proof of Theorem \ref{UT}]
Let $h^*=\{h^*_G:G\ \text{abelian group}\}$ be a collection of functions $h_G^*:\End(G)\to\R_{\geq0}$ satisfying (a)--(e) of Theorem \ref{UT}. 
Let $R=\Z[t]$. By Proposition \ref{h-lf}, $h^*$ is a length function of $\Mod_R$. In view of the above mentioned results from \cite{V}, it suffices only to check that 
\begin{center}
$h^*(R/\mathfrak p)= h(R/\mathfrak p)$ for all prime ideals $\mathfrak p$ of $R$.
\end{center}

Let us recall that $R$ has Krull dimension $2$, so the non-zero prime ideals $\mathfrak p$ of $R$ are either minimal of maximal. In particular, if $\mathfrak p$ is a minimal prime ideal of $R$, then $\mathfrak p=(f(t))$, where $f(t)\in R$ is irreducible (either $f(t)=p$ is a prime in $\Z$, or $f(t)$ is irreducible with $\deg f(t)>0$). On the other hand, a maximal ideal $\mathfrak m$ of $R$ is of the form $\mathfrak m=(p,f(t))$, where $p$ is a prime in $\Z$ and $f(t)\in R$ 
 is irreducible modulo $p$ and $\deg f(t)>0$.

\smallskip
(i) For $\mathfrak p=0$, we prove that $h^*(R)=\infty=h(R)$. To this end we have to show that $\mu_t:R\to R$ has $h^*(\mu_t)=\infty=h(\mu_t)$. The flow $(R,\mu_t)$ is isomorphic to the flow $(\Z^{(\N)},\beta_\Z)$. By (a) and by Proposition \ref{properties}(a) respectively, this yields $h^*(\mu_t)=h^*(\beta_\Z)$ and $h(\mu_t)=h(\beta_\Z)$. As $h^*(\beta_\Z)=\infty=h(\beta_\Z)$ by (B) and Example \ref{beta}, we are done.

\smallskip
(ii) To see that $h^*(R/\mathfrak m)=h(R/\mathfrak m)$ for a maximal ideal $\mathfrak p=\mathfrak m$ of $R$, it suffices to show that $R/\mathfrak m$ is finite and apply the Uniqueness Theorem for the torsion case \cite[Theorem 6.1]{DGSZ}.
Indeed, $\mathfrak m=(p,f(t))$, where $p\in\Z$ is a prime and $f(t)\in R$ is irreducible modulo $p$. So, $R/\mathfrak m\cong(\Z/p\Z)[t]/(f_p(t))$, where $f_p(t)$ is the reduction of $f(t)$ modulo $p$. Hence, $R/\mathfrak m$ is finite. 

\smallskip
(iii) It remains to see that  $h^*(R/\mathfrak p)=h(R/\mathfrak p)$ when $\mathfrak p$ is a minimal prime ideal of $R$. 

\smallskip
Assume first that $\mathfrak p=(p)$ for some prime $p\in\Z$. We show that $h^*(R/\mathfrak p)=\log p=h(R/\mathfrak p)$.

Indeed, $R/(p)\cong(\Z/p\Z)[t]$ and $(\Z/p\Z)[t]\cong\Z(p)^{(\N)}$ as abelian groups. Moreover, $\mu_t:(\Z/p\Z)[t]\to (\Z/p\Z)[t]$ is conjugated to $\beta_{\Z(p)}$ through this isomorphism. By (a), (B) and Proposition \ref{properties}(a), we get the required equality.  

\smallskip
Suppose now that $\mathfrak p=(f(t))$, where $f(t)=a_0+a_1t+\ldots+a_nt^n\in \Z[t]$ is irreducible (so, primitive) with $\deg f(t)=n>0$. 
Let $M=R/\mathfrak p$. We verify that $h^*(M)=h(M)$.

Let $J$ be the principal ideal generated by $f(t)$ in $\Q[t]$ and $D=\Q[t]/J$. Let $\pi:\Q[t]\to D$ be the canonical projection. Since $J\cap R=\mathfrak p$, $\pi$ induces an injective homomorphism $M\to D$ and we can think without loss of generality that $M$ is a subgroup of $D$ (identifying $M$ with $\pi(R)$).
Since $D\cong\Q^n$ as abelian groups and $r(M)\geq n$, $M$ is essential in $D$. Consider $\mu_t:D\to D$ and $\mu_t\restriction_M:M\to M$.
Since $D/M$ is torsion, $h^*(\overline\mu_t)=h(\overline\mu_t)=0$ by the Uniqueness Theorem for the torsion case \cite[Theorem 6.1]{DGSZ}. 
By (C), $h^*(\mu_t) =m(f(t))= h(\mu_t)< 	\infty$. Moreover, $h^*(\mu_t\restriction_M) = h^*(\mu_t) - h^*(\overline\mu_t)$, by (c); while $h(\mu_t\restriction_M) = h(\mu_t) - h(\overline\mu_t)$ by the Addition Theorem \ref{AT}. This yields $h^*(\mu_t\restriction_M) = h(\mu_t\restriction_M)$, i.e., $h^*(M)=h(M)$.
\end{proof}

\section{Entropy vs Mahler measure}\label{mahler-sec}

\subsection{Computation of entropy via Mahler measure}

For an algebraic number $\alpha\in\C$, the \emph{Mahler measure} $m(\alpha)$ of $\alpha$ is the Mahler measure of $sf(t)$, where $f(t)$ is the minimal polynomial of $\alpha$ over $\Q$ and $s$ is the least positive common multiple of the denominators of $f$.
In order to characterize the algebraic numbers $\alpha$ with $m(\alpha)=0$ one need the following:

\begin{theorem}[Kronecker Theorem]\label{Kr}\emph{\cite{Kr}}
Let $f(t)\in\Z[t]$ be a monic polynomial with roots $\alpha_1, \ldots, \alpha_k$. If $\alpha_1$ is not a root of unity, then $|\alpha_i|>1$ for at least one $i\in\{1,\ldots,k\}$. 
\end{theorem}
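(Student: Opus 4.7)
The plan is to prove the contrapositive: if every root of the monic polynomial $f(t)\in\Z[t]$ lies in the closed unit disc, then every nonzero root is a root of unity. (The statement as written implicitly rules out the degenerate case $\alpha_1=0$.)

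First I would construct auxiliary polynomials: for each $n\in\N_+$ set
\[
f_n(t)=\prod_{i=1}^{k}(t-\alpha_i^n).
\]
Since $\alpha_1,\ldots,\alpha_k$ are algebraic integers, so are their powers, and each coefficient of $f_n$ is a symmetric polynomial (with integer coefficients) in $\alpha_1^n,\ldots,\alpha_k^n$, hence a symmetric polynomial in $\alpha_1,\ldots,\alpha_k$. Therefore $f_n(t)\in\Z[t]$ and is monic.

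Next I would invoke a boundedness-plus-pigeonhole argument. Under the assumption $|\alpha_i|\leq 1$ for all $i$, the $j$-th elementary symmetric function of $\alpha_1^n,\ldots,\alpha_k^n$ is bounded in absolute value by $\binom{k}{j}$, uniformly in $n$. Thus the coefficients of $f_n$ lie in a fixed finite set of integers. Since $\deg f_n=k$ is fixed, the family $\{f_n:n\in\N_+\}$ is finite, so there exist $1\leq n<m$ with $f_n=f_m$. Equality of these monic polynomials means equality of the multisets of roots: there is a permutation $\sigma\in S_k$ such that $\alpha_{\sigma(i)}^n=\alpha_i^m$ for every $i$.

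The main (and only real) obstacle is converting this permutation identity into the conclusion that each nonzero $\alpha_i$ is a root of unity. I would iterate $\sigma$: an easy induction on $r$ gives $\alpha_{\sigma^r(i)}^{n^r}=\alpha_i^{m^r}$ for all $r\geq 1$. Taking $r=p$ to be the order of $\sigma$ in $S_k$, we obtain $\alpha_i^{n^p}=\alpha_i^{m^p}$ for every $i$. Since $1\leq n<m$ we have $m^p-n^p>0$, so for every nonzero $\alpha_i$ we conclude $\alpha_i^{m^p-n^p}=1$, i.e.\ $\alpha_i$ is a root of unity. Contrapositively, if some root $\alpha_1$ is not a root of unity (and not zero), the hypothesis $|\alpha_i|\leq 1$ for all $i$ must fail, proving $|\alpha_i|>1$ for at least one index $i$.
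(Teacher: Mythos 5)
Your proof is correct. Note first that the paper itself gives no proof of this statement: it is quoted as Kronecker's classical theorem with a citation to the original 1857 paper, so there is nothing internal to compare against. Your argument is the standard classical proof: the polynomials $f_n(t)=\prod_i(t-\alpha_i^n)$ are monic with integer coefficients (by the fundamental theorem of symmetric polynomials applied to the elementary symmetric functions of the $\alpha_i^n$), the hypothesis $|\alpha_i|\le 1$ bounds their coefficients uniformly by $\binom{k}{j}$, pigeonhole yields $f_n=f_m$ with $n<m$, and the resulting permutation identity $\alpha_{\sigma(i)}^n=\alpha_i^m$ iterates to $\alpha_i^{n^p}=\alpha_i^{m^p}$ where $p$ is the order of $\sigma$, forcing each nonzero $\alpha_i$ to satisfy $\alpha_i^{m^p-n^p}=1$. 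The induction step $\alpha_{\sigma^{r+1}(i)}^{n^{r+1}}=\bigl(\alpha_{\sigma^{r+1}(i)}^{n}\bigr)^{n^r}=\bigl(\alpha_{\sigma^r(i)}^{m}\bigr)^{n^r}=\bigl(\alpha_{\sigma^r(i)}^{n^r}\bigr)^{m}=\alpha_i^{m^{r+1}}$ checks out, and your remark that the statement implicitly excludes $\alpha_1=0$ (where it would literally fail, e.g.\ for $f(t)=t$) is a legitimate and worthwhile observation.
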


\begin{corollary}\label{Kronecker}
Let $f(t)\in\Z[t]\setminus\{0\}$ be a primitive polynomial. Then $m(f(t))=0$ if and only if $f(t)$ is cyclotomic (i.e., all the roots of $f(t)$ are roots of unity).
Consequently, if $\alpha$ is an algebraic integer, then $m(\alpha)=0$ if and only if $\alpha$ is a root of unity.
\end{corollary}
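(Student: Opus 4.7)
The plan is to reduce everything to Kronecker's Theorem \ref{Kr}, which is essentially already formulated as the content we need. Write $f(t)=a_k\prod_{i=1}^{k}(t-\alpha_i)$. Observing that each $\log|\alpha_i|$ with $|\alpha_i|>1$ is strictly positive, the identity
$$m(f(t))=\log|a_k|+\sum_{|\alpha_i|>1}\log|\alpha_i|$$
shows that $m(f(t))=0$ is equivalent to the conjunction $|a_k|=1$ and $|\alpha_i|\leq 1$ for every $i=1,\ldots,k$.

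For the direction ($\Rightarrow$), assuming $m(f(t))=0$, I would use $|a_k|=1$ to rewrite $f(t)=\varepsilon g(t)$ with $\varepsilon\in\{\pm 1\}$ and $g(t)\in\Z[t]$ monic having the same roots as $f(t)$. Kronecker's Theorem \ref{Kr} applied to $g(t)$ says that if some $\alpha_i$ were not a root of unity, then some $|\alpha_j|$ would exceed $1$, contradicting our standing hypothesis. Hence every root of $f(t)$ is a root of unity, i.e., $f(t)$ is cyclotomic.

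For the direction ($\Leftarrow$), assuming all roots $\alpha_i$ of $f(t)$ are roots of unity, the sum $\sum_{|\alpha_i|>1}\log|\alpha_i|$ is empty, so it remains to show $|a_k|=1$. The polynomial $g(t)=\prod_{i=1}^{k}(t-\alpha_i)$ is monic; its coefficients are elementary symmetric functions of algebraic integers, hence algebraic integers; and since $f(t)=a_k g(t)\in\Z[t]$ with $a_k\in\Z$, the coefficients of $g(t)$ lie in $\Q$, and thus in $\Z$. So $g(t)\in\Z[t]$ is monic, and comparing with the primitive polynomial $f(t)=a_kg(t)$ forces $|a_k|=1$. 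Therefore $m(f(t))=0$.

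The final clause about an algebraic integer $\alpha$ follows immediately: the minimal polynomial of $\alpha$ over $\Q$ is monic with integer coefficients, hence primitive, and its roots (the Galois conjugates of $\alpha$) are all roots of unity precisely when $\alpha$ itself is a root of unity. I do not expect any serious obstacle, since Kronecker's Theorem does all the deep lifting; the only care needed is in the ($\Leftarrow$) direction, where one must justify that a primitive integer polynomial all of whose complex roots are roots of unity must have leading coefficient $\pm 1$, and this is handled by the algebraic-integer argument above.
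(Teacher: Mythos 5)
Your proof is correct, and the skeleton is the same as the paper's: the direction $m(f(t))=0\Rightarrow f$ cyclotomic is in both cases an immediate application of Kronecker's Theorem \ref{Kr} once one observes that $m(f(t))=0$ forces $|a_k|=1$ and $|\alpha_i|\leq 1$ for all $i$. The two arguments diverge only in the converse direction, on the point of why the leading coefficient must be $\pm 1$. The paper picks $m$ with every $\alpha_i$ a root of $t^m-1$ and invokes the Gauss Lemma to get that $f(t)$ divides $t^m-1$ in $\Z[t]$, whence $a_k=\pm1$; you instead note that $g(t)=\prod_i(t-\alpha_i)$ has coefficients that are simultaneously algebraic integers (elementary symmetric functions of roots of unity) and rational, hence lie in $\Z$, and then compare contents in $f=a_kg$ using primitivity. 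Your route is slightly longer but a bit more robust: the paper's divisibility claim $f(t)\mid t^m-1$ is literally false when $f$ has repeated roots (e.g.\ $f(t)=(t-1)^2$), since $t^m-1$ is separable, whereas your integrality-plus-content argument applies verbatim to any primitive $f$ all of whose roots are roots of unity. Both arguments are, at bottom, instances of the Gauss Lemma, so nothing essentially new is gained, but your version closes a small gap in the published one.
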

\begin{proof}
Let $f(t)=a_0+a_1t+\ldots+a_kt^k$ and $\deg f(t)=k$. Assume that all roots $\alpha_1,\ldots,\alpha_k$ of $f(t)$ are roots of unity. Then there exists $m\in\N_+$ such that every $\alpha_i$ is a root of $t^m-1$. By Gauss Lemma $f(t)$ divides $t^m-1$ in $\Z[t]$. Therefore, $a_k=\pm 1$, and hence $m(f(t))=0$.
Suppose now that $m(f(t))=0$. This entails  that $f(t)$ is monic. Moreover, $|\alpha_i|\leq 1$ for every $i\in\{1,\ldots,k\}$. By Kronecker Theorem \ref{Kr} each $\alpha_i$ is a root of unity.
\end{proof}

This determines completely the case of zero Mahler measure. For the greatest lower bound of the positive values of the Mahler measure one has Lehmer Problem \ref{h>0-pb}, that can be reformulated also as follows: 

\begin{problem}
Is  $\inf\{m(\alpha)>0:\alpha\ \text{algebraic integer}\}>0$?
\end{problem}
 
Obviously, the problem can be formulated equivalently for algebraic numbers, as $m(\alpha)\ge \log 2$ for every algebraic number $\alpha$ that is not an algebraic integer. 

\smallskip
We prove now that $\mathfrak L=\varepsilon$, that is, that the infimum of the positive values of the Mahler measure coincides with the infimum of the positive values of the algebraic entropy.

\begin{proof}[\bf Proof of Theorem \ref{epsilon}]
We start by proving the first equality in Theorem \ref{epsilon}, namely 
\begin{equation}\label{=1}
\varepsilon=\inf\{h(\phi)>0:n\in\N_+,\phi\in\Aut(\Q^n)\}. 
\end{equation}
We split the proof in three steps of reductions. 

(i) Let $G$ be an abelian group, $\phi\in \End(G)$ and $\overline\phi\in \End(G/t(G))$. Since $h(\phi\restriction_{t(G)})=\ent(\phi\restriction_{t(G)})$, we can suppose that $h(\phi\restriction_{t(G)})=0$, otherwise $h(\phi)\geq h(\phi\restriction_{t(G)})\geq\log2$ by Lemma \ref{restriction_quotient} and \eqref{ent>log2}. By the Addition Theorem \ref{AT} $h(\phi)=h(\overline\phi)$. In other words, we can consider only torsion-free abelian groups $G$.

\smallskip
(ii) By (i), we can assume that $G$ is a torsion-free abelian group and $\phi\in\End(G)$. If there exists $g\in G$ such that $r(V(\phi,g))$ is infinite, then $h(\phi)\geq h(\phi\restriction_{V(\phi,g)})=\infty$ by Lemma \ref{restriction_quotient} and Corollary \ref{cyc_case}(b). Hence, we can assume that $r(V(\phi,g))$ is finite for every $g\in G$. Then $r(V(\phi,F))$ is finite for every $F\in[G]^{<\omega}$.
By Lemma \ref{remark-G_F-gen}, $h(\phi)=\sup_{F\in[G]^{<\omega}}h(\phi\restriction_{V(\phi,F)})$. So, we can consider only torsion-free abelian groups of finite rank. Furthermore, $h(\phi)=h(\widetilde\phi)$ by Theorem \ref{AA}, and hence we can reduce to divisible torsion-free abelian groups of finite rank.

\smallskip
(iii) By (ii) and by Proposition \ref{properties}(a), we can assume that $G=\Q^n$ for some $n\in\N_+$. Let $\phi\in\End(\Q^n)$. By Proposition \ref{red-to-inj} $h(\phi)=h(\overline\phi)$, where the induced endomorphism $\overline\phi\in \End(\Q^n/\ker_\infty\phi)$ is injective (hence, surjective). Moreover, $\Q^n/\ker_\infty\phi\cong\Q^m$ for some $m\in\N$, $m\leq n$, as $\ker_\infty\phi$ is pure in $\Q^n$ (so $\Q^n/\ker_\infty\phi$ is divisible and torsion-free by Lemma \ref{div<->pure}(c)). Therefore, $\overline\phi\in \Aut(\Q^m)$, and this proves the equality in \eqref{=1}.

\smallskip
The equality $\mathfrak L=\varepsilon$ follows from \eqref{=1} and the Algebraic Yuzvinski Formula \ref{Yuz}.
\end{proof}

Following \cite{DG_Pak}, for an abelian group $G$, let ${\bf E}_{alg}(G) = \{h(\f) : \f \in \End(G)\}$ and let ${\bf E}_{alg}$ be the union of all ${\bf E}_{alg}(G)$ when $G$ varies among the class of all abelian groups. Then ${\bf E}_{alg}$ is a submonoid of the monoid $(\mathbb R_{\geq0}\cup\{\infty\},+,0)$, and $\mathfrak L=\inf ({\bf E}_{alg}\setminus \{0\})$ by Theorem \ref{epsilon}.
Moreover, let  
$$\mathbb A=\{\log|\alpha|: \alpha\ \text{algebraic number},\ |\alpha|\geq1\},$$ 
which is a dense countable submonoid of $(\R_{\geq0},+,0)$. The next theorem collects some equivalent forms of $\mathfrak L=0$ (a  proof can be found in \cite{DG_Pak}). 

\begin{theorem}\label{LD}
The following conditions are equivalent:
\begin{itemize}
  \item[(a)] ${\bf E}_{alg}=\R_{\geq0}\cup \{\infty\}$;
  \item[(b)] $\mathfrak L=0$; 
  \item[(c)] $\inf(\bigcup_{n\in\N_+}{\bf E}_{alg}(\Z^n)\setminus \{0\})=0$.
\end{itemize}
If $\mathfrak L>0$, then ${\bf E}_{alg}\subseteq \mathbb A\cup\{\infty\}$, so in particular it is countable. 
\end{theorem}

The inclusion ${\bf E}_{alg}\subseteq \mathbb A\cup\{\infty\}$ is proper in case $\mathfrak L>0$ by the density of the set $\mathbb A$ in $\R_{\geq0}$. 

\begin{remark}
By the Algebraic Yuzvinski Formula \ref{Yuz}, the algebraic entropy $h(\phi)$ of an endomorphism $\phi:\Q^n\to \Q^n$, $n\in\N_+$, is equal to the Mahler measure of a polynomial $f(t)\in\Z[t]$,  namely $f(t)=sg(t)\in\Z[t]$, where $g(t)\in\Q[t]$ is the characteristic polynomial of $\phi$ and $s$ is the least positive common multiple of the denominators of $g(t)$. One can read this in the opposite direction, that is, the Mahler measure of a polynomial $f(t)\in\Z[t]$ of degree $n$ is equal to the algebraic entropy of some endomorphism $\phi$ of $\Q^n$. Indeed, let $f(t)=a_0+a_1t+\ldots+a_nt^n\in\Z[t]$ with $\deg f(t)=n$ and let 
\begin{equation}\label{companion}
C(f)=\begin{pmatrix}
0 & 0 & \ldots & 0 & -\frac{a_0}{a_n}\\
1 & 0 & \ldots & 0 & -\frac{a_1}{a_n}\\
0 & 1 & \ddots & 0 & -\frac{a_2}{a_n}\\
\vdots & \ddots & \ddots & \vdots & \vdots\\
0 & 0 & \ldots & 1 & -\frac{a_{n-1}}{a_n}
\end{pmatrix}
\end{equation}
be the companion matrix associated to $f(t)$. The characteristic polynomial of $C(f)$ is $f(t)$, and $C(f)$ is associated to an endomorphism $\phi$ of $\Q^n$ (see also Remark \ref{LaastRemark} about this endomorphism). Then $m(f(t))=h(\phi)$ by the Algebraic Yuzvinski Formula \ref{Yuz}.
\end{remark}

Finally, we present the above situation from a different point of view, namely the \emph{algebraic entropy} associated to the multiplication by an element in a commutative ring, defined in \cite{Z}.

\begin{remark}\label{LaastRemark}
Let $\alpha\in\mathbb C$ be an algebraic number of degree $n\in\N_+$ over $\Q$ and let $f(t)\in\Q[t]$ be its minimal polynomial, with $\deg f(t)=n$.
Let $K=\Q(\alpha)$ be the simple field extension of $\Q$ associated to $\alpha$; then $K\cong\Q^n$ as abelian groups. Inspired by \cite{Z}, call \emph{algebraic entropy} $h(\alpha)$ of $\alpha$ the algebraic entropy $h(\mu_\alpha)$, where $\mu_\alpha$ is the multiplication by $\alpha$ in $K$. 
Let $a_n$ be the smallest positive integer such that $a_n f(t)=a_0+a_1t+\ldots+a_nt^n\in\Z[t]$. With respect to the basis $\{1,\alpha,\ldots,\alpha^{n-1}\}$ of $K$, the matrix associated to $\mu_\alpha$ is the companion matrix $C(f)$ in \eqref{companion}.
The characteristic polynomial of $C(f)$ is $f(t)$. By the Algebraic Yuzvinski Formula \ref{Yuz}, $h(\mu_\alpha)=m(f(t))$, i.e., $h(\alpha)=m(\alpha)$.
This means that the algebraic entropy of an algebraic number coincides with its Mahler measure.
\end{remark}

\subsection{Lehmer Problem and realization of entropy}

This section is dedicated to the proof of the Realization Theorem, showing that Lehmer Problem is equivalent to the ``realization problem'' of the finite values of the algebraic entropy.

\medskip
In the next lemma we collect known results showing the validity of the Realization Theorem for finitely generated flows satisfying some additional restraint. 

\begin{lemma}\label{realtor}
Let $G$ be an abelian group, $\f\in\End(G)$ and assume that $G$ is finitely $\phi$-generated by $F\in[G]^{<\omega}$. 
\begin{itemize}
\item[(a)]\emph{\cite{DGSZ}} If $G$ is torsion, then $h(\phi)=H(\phi,F)$.
\item[(b)]\emph{\cite{DG}} If $H(\phi,F)=0$, then $h(\phi)=0$ (in particular, $h(\phi)=H(\phi,F))$.
\end{itemize}
\end{lemma}

The conclusion $h(\phi)=H(\phi,F)$ in the above lemma does not hold true in general. Indeed, consider $\mu_3 :\Z \to\Z$ and $F=\{0,1\}$; now $H(\mu_3,F) \leq \log |F| = \log 2$, while $h(\mu_3) = \log 3$. This can be generalized to any non-torsion infinite finitely generated abelian group $G$ with $F$ a finite set of generators of $G$; in this case, for every $k\in\N_+$ with $k>|F|$, one has $h(\mu_k)\geq\log k>\log|F|$.

\medskip
The following is a direct consequence of Proposition \ref{AA}.

\begin{corollary}\label{realdiv}
Let $G$ be a torsion-free abelian group and $\f\in\End(G)$ with $h(\phi)<\infty$. Then $h(\widetilde\phi)$ realizes if and only if $h(\phi)$ realizes.
\end{corollary}
\begin{proof}
If $h(\phi)$ realizes, since $h(\phi)=h(\widetilde\phi)$ by Proposition \ref{AA}(b), then also $h(\widetilde\phi)$ realizes.
Assume now that $h(\widetilde\phi)$ realizes. Then there exists $F\in[D(G)]^{<\omega}$ such that $h(\widetilde\phi)=H(\widetilde\phi,F)$. By Proposition \ref{AA} there exists $m\in\N_+$ such that $mF\subseteq G$ and $h(\phi)=h(\widetilde\phi)=H(\widetilde\phi,F)=H(\phi,mF)$, therefore, $h(\phi)$ realizes.
\end{proof}

The following technical lemma is needed in the proof of Proposition \ref{realttf}.

\begin{lemma}\label{realVn}
Let $G$ be an abelian group and $\phi\in\End(G)$ with $h(\phi)<\infty$. Let $\{V_n\}_{n\in\N}$ be an increasing chain of $\phi$-invariant subgroups of $G$, such that $V_0=0$ and  $h(\phi)=\sup_{n\in\N}h(\phi\restriction_{V_n})$. Let $\phi_n= \phi\restriction_{V_n}$ and $\overline\phi_{n}\in \End(V_{n}/V_{n-1})$ be the induced endomorphism for $n \in \N_+$. Assume that $\eta>0$ and that $h(\overline\phi_{n})$ is either $0$ or $\geq\eta$ for every $n\in\N_+$. Then $h(\phi)=h(\phi_n)$ for some $n\in\N$. 
\end{lemma}
\begin{proof} 
For the sake of completeness let $\phi_{0}=  \phi\restriction_{V_0}$ and note that $\phi_1 = \overline\phi_{1}$. 
The sequence $\{h(\phi_n)\}_{n\in\N}$ is an increasing sequence of non-negative real numbers.
For every $n\in\N_+$, let 
$$d_{n}=h(\phi_{n})-h(\phi_{n-1}).$$
By the Addition Theorem \ref{AT}, $h(\phi_{n})=h(\phi_{n-1})+h(\overline\phi_{n})$ for every $n\in\N_+$. Hence, 
$d_n=h(\overline\phi_{n})$, and so $d_n$ is either $0$ or $\geq\eta$ by hypothesis. Therefore, for every $n\in\N$, 
$$h(\phi_{n})=\sum_{i=1}^{n} d_i\geq m_{n}\eta,\ \text{where}\ m_{n}=|\{d_i\neq0:1\leq i \leq n\}|.$$
Since $h(\phi_{n})\leq h(\phi)$ for every $n\in\N$, we have that 
$$m_{n}\leq \frac{h(\phi)}{\eta}.$$ 
Consequently, there are finitely many $d_n\neq 0$, and this means that the sequence $\{h(\phi_n)\}_{n\in\N}$ stabilizes. By hypothesis, $h(\phi)=\sup_{n\in\N}h(\phi_n)$, hence $h(\phi)=h(\phi_n)$ for every sufficiently large $n\in\N$.
\end{proof}

From Lemma \ref{realVn} and Corollary \ref{realdiv} we deduce respectively item (a) and item (b) of the following result.

\begin{proposition}\label{realttf}
Let $G$ be an abelian group and $\f\in\End(G)$ with $h(\phi)<\infty$. Then $h(\phi)$ realizes when:
\begin{itemize}
  \item[(a)] $G$ is torsion;
  \item[(b)] $G$ is torsion-free of finite rank.
\end{itemize}
\end{proposition}

\begin{proof}
(a)  By Proposition \ref{sup->lim} there exists a chain $\{V_n\}_{n\in\N}$ in $\mathfrak F(G,\phi)$ such that  $h(\phi)=\lim_{n\to\infty}h(\phi\restriction_{V_n})$. We can apply Lemma \ref{realVn} with $\eta = \log 2$ to find $n\in\N$ such that $h(\phi)=h(\phi\restriction_{V_n})$. We have $V_n=V(\phi,F)$ for some $F\in [G]^{<\omega}$. Since $G$ is torsion, $F'=\langle F\rangle$ is finite and $V_n=V(\phi,F')=T(\phi,F')$. By Lemma \ref{realtor}(a), $h(\phi\restriction_{V_n})=H(\phi,F')$, and hence $h(\phi)=h(\phi\restriction_{V_n}) = H(\phi,F')$.

\smallskip
(b) If $G$ is a torsion-free abelian group of finite rank, by Corollary \ref{realdiv} we can assume that $G$ is also divisible and then $G\cong\Q^n$ for some $n\in\N_+$. By \cite[Theorem 4.18]{GV} there exist infinitely many $F\in[G]^{<\omega}$ such that $h(\phi)=H(\phi,F)$.
\end{proof}

We are now in position to prove the Realization Theorem for the algebraic entropy:

\begin{proof}[\bf Proof of Theorem \ref{RT}]
(b)$\Rightarrow$(a) Assume that $\mathfrak L=0$. By Theorem \ref{LD} this is equivalent to ${\bf E}_{alg}=\R_{\geq0}\cup \{\infty\}$. Hence, for every $n\in\N$ there exist an abelian group $G_n$ and $\phi_n\in\End(G_n)$ such that $h(\phi_n)=\frac{1}{2^n}$. Now let $G=\bigoplus_{n\in\N}G_n$ and $\phi=\bigoplus_{n\in\N}\phi_n$. Then $h(\phi)=\sum_{n\in\N}\frac{1}{2^n}=1$ by Proposition \ref{properties}(c,d). To conclude it suffices to note that for every $F\in[G]^{<\omega}$ there exists $n\in\N$ such that $F\subseteq G_0\oplus\ldots\oplus G_n$, and so $H(\phi,F)\leq h(\phi_0\oplus\ldots\oplus\phi_n)=\sum_{i=0}^n\frac{1}{2^i}<1$ by Proposition \ref{properties}(d).  Then $h(\phi)=1$, but $h(\phi)$ does not realize.

\smallskip
(a)$\Rightarrow$(b) By Proposition \ref{sup->lim} there exists a chain $\{V_n\}_{n\in\N}$ in $\mathfrak F(G,\phi)$ such that $h(\phi)=\lim_{n\to\infty}h(\phi\restriction_{V_n})$. As $\mathfrak L>0$, by Lemma \ref{realVn} (with $\eta = \mathfrak L$) there exists $n\in\N$ such that $h(\phi)=h(\phi\restriction_{V_n})$. Hence, we can assume without loss of generality that $G$ is finitely $\phi$-generated by some $F\in[G]^{<\omega}$.

Since $h(\phi)<\infty$, Lemma \ref{X} implies that $V(\phi,g)$ has finite rank for every $g\in G$. 
Then $G$ has finite rank as well, and by Lemma \ref{fin-gen+noet->fin-gen}(b) there exists a finite-rank torsion-free subgroup $K$ of $G$ such that $G\cong K\times t(G)$. By Example \ref{skewex}, $\phi$ is the skew product of $\phi_1$ and $\phi_2$, where $\f_1:K\to K$ is conjugated to $\overline\f:G/t(G)\to G/t(G)$ by the isomorphism $K\cong G/t(G)$, and $\f_2=\f\restriction_{t(G)}$; moreover, $s_\f(K)$ is finite.
By Proposition \ref{realttf}(a) there exists $F_2\in [t(G)]^{<\omega}$ such that $h(\phi_2)=H(\phi_2,F_2)$. We can assume without loss of generality that $s_\phi(K)\subseteq F_2$. Since $K\cong G/t(G)$ is torsion-free of finite rank, by Proposition \ref{realttf}(b) there exists $F_1\in[K]^{<\omega}$ such that $h(\phi_1)=H(\phi_1,F_1)$. 
By Proposition \ref{properties}(d) and Proposition \ref{AT-semi-poor}, for $\pi_\phi=\phi_1\times\phi_2$ one has 
$$h(\phi)=h(\pi_\phi)=h(\phi_1)+h(\phi_2)=H(\phi_1,F_1)+H(\phi_2,F_2)=H(\phi_1\times\phi_2,F_1\times F_2)=H(\phi,F_1\times F_2),$$
as $T_n(\pi_\phi,F_1\times F_2)=T_n(\phi,F_1\times F_2)$ for every $n\in\N_+$. This concludes the proof for the case $G=V(\phi,F)$, and so the proof of the theorem.
\end{proof}


\begin{thebibliography}{99}

\bibitem{Ab} L. M. Abramov, \emph{The entropy of an automorphism of a solenoidal group}, Teor. Veroyatnost. i Primenen 4 (3) (1959) 249--254.

\bibitem{AKM} R. L. Adler, A. G. Konheim, M. H. McAndrew, \emph{Topological entropy}, Trans. Amer. Math. Soc. 114 (1965) 309--319.

\bibitem{AADGH} M. Akhavin, F. Ayatollah Zadeh Shirazi, D. Dikranjan, A. Giordano Bruno, A. Hosseini, \emph{Algebraic entropy of shift endomorphisms on abelian groups}, Quaest. Math. 32 (4) (2009) 529--550.

\bibitem{A} D. Arnold, \emph{Finite Rank Torsion Free Abelian Groups and Rings}, L.N.M. n. 931, Springer 1982.

\bibitem{Bow} R. Bowen, \emph{Entropy and the fundamental group}, in: The Structure of Attractors in Dynamical Systems (Proc. Conf. North Dakota State Univ., Frago, 1977), Lecture Notes in Math. 668 (1978) 21--29.

\bibitem{DGS} D. Dikranjan, A. Giordano Bruno, L. Salce, \emph{Adjoint algebraic entropy}, Journal of Algebra {324} (2010) 442--463.

\bibitem{DG-peters} D. Dikranjan, A. Giordano Bruno, \emph{Entropy on abelian groups}, arxiv.org/pdf/1007.0533v1.pdf (2010).

\bibitem{DG} D. Dikranjan, A. Giordano Bruno, \emph{The Pinsker subgroup of an algebraic flow}, Jour. Pure Appl. Algebra 216 (2012) 364--376.

\bibitem{DG-lf} D. Dikranjan, A. Giordano Bruno, \emph{Limit free computation of entropy}, Rend. Istit. Mat. Univ. Trieste 44 (2012) 297--312.

\bibitem{DG2} D. Dikranjan, A. Giordano Bruno, \emph{The connection between topological and algebraic entropy}, Topology Appl. 159 (13) (2012) 2980--2989.

\bibitem{DG_Pak} D. Dikranjan, A. Giordano Bruno, \emph{Topological and algebraic entropy for group endomorphisms}, Proceedings ICTA2011 Islamabad, Pakistan July 4-10, 2011 Cambridge Scientific Publishers (2012) 133--214.

\bibitem{DG1} D. Dikranjan, A. Giordano Bruno, \emph{Entropy in a category}, Appl. Categ. Structures 21 (1) (2013) 67--101.

\bibitem{DG_PC} D. Dikranjan, A. Giordano Bruno, \emph{Discrete dynamical systems in group theory}, Note Mat. 33 (1) (2013) 1-48.

\bibitem{DG-BT} D. Dikranjan, A. Giordano Bruno, \emph{The Bridge Theorem for totally disconnected LCA groups}, Topology Appl. 169 (1) (2014) 21--32.

\bibitem{DGSV} D. Dikranjan, A. Giordano Bruno, L. Salce, S. Virili, \emph{Intrinsic algebraic entropy}, J. Pure Appl. Algebra 219 (2015) 2933--2961.

\bibitem{DGSZ} D. Dikranjan, B. Goldsmith, L. Salce, P. Zanardo, \emph{Algebraic entropy of endomorphisms of abelian groups}, Trans. Amer. Math. Soc. {361} (2009) 3401--3434.

\bibitem{DGZ} D. Dikranjan, K. Gong, P. Zanardo, \emph{Endomorphisms of abelian groups with small algebraic entropy}, Linear Algebra Appl. 439 (7) (2013) 1894--1904. 

\bibitem{DPS} D. Dikranjan, I. Prodanov, L. Stoyanov, \emph{Topological Groups: Characters,  Dualities  and  Minimal Group Topologies}, Pure and Applied Mathematics, Vol. 130, Marcel Dekker Inc., New York-Basel, 1989.

\bibitem{DSV} D. Dikranjan, M. Sanchis, S. Virili, \emph{New and old facts about entropy on uniform spaces and topological groups}, Topology Appl. 159 (7) 
(2012) 1916--1942.

\bibitem{Ward0}  G. Everest, T. Ward, \emph{Heights of polynomials and entropy in algebraic dynamics}, Universitext. Springer-Verlag London Ltd., London 1999. 

\bibitem{FFK} K. Falconer, B. Fine, D. Kahrobaei, \emph{Growth rate of an endomorphism of a group}, Groups Complex. Cryptol. 3 (2011) no. 2 285--300. 

\bibitem{Fek} M. Fekete, \emph{\"Uber die Verteilung der Wurzeln bei gewisser algebraichen Gleichungen mit ganzzahlingen Koeffizienten}, Math. Zeitschr. 17 (1923) 228--249.

\bibitem{F} L. Fuchs, \emph{Infinite Abelian Groups}, Vol. I and II, Academic Press, 1970 and 1973.

\bibitem{GB} A. Giordano Bruno, \emph{Algebraic entropy of shift endomorphisms on products}, Comm. Algebra 38 (11) (2010) 4155--4174.

\bibitem{GV} A. Giordano Bruno, S. Virili, \emph{The Algebraic Yuzvinski Formula}, J. Algebra 423 (2015) 114--147. 

\bibitem{GV-app} A. Giordano Bruno, S. Virili, \emph{About the Algebraic Yuzvinski Formula}, Topol. Algebra and its Appl. 3 (1) (2015) 86--103.

\bibitem{Halmos2} P. Halmos, \emph{Finite-Dimensional Vector Spaces}, Undergraduate Texts in Mathematics, Springer-Verlag, New York - Heidelberg - Berlin, 1987.

\bibitem{HR}  E. Hewitt, K. A. Ross, \emph{Abstract harmonic analysis I}, Springer-Verlag, Berlin-Heidelberg-New York, 1963.

\bibitem{Hi} E. Hironaka, \emph{What is\ldots Lehmer's number?}, Not. Amer. Math. Soc. {56} (3) (2009) 374--375.


\bibitem{Kr}  L. Kronecker, \emph{Zwei S\" atze \" uber Gleichungen mit ganzzahligen Coefficienten}, Jour. Reine Angew. Math. {53} (1857) 173--175.

\bibitem{K} A. N. Kolmogorov, \emph{New metric invariants of transitive dynamical systems and automorphisms of Lebesgue spaces}, Doklady Akad. Nauk. SSSR 119 (1958) 861--864 (in Russian).

\bibitem{L} D. H. Lehmer, \emph{Factorization of certain cyclotomic functions}, Ann. of Math. {34} (1933) 461--469.

\bibitem{LW} D. Lind, T. Ward, \emph{Automorphisms of solenoids and $p$-adic entropy}, Ergodic Theory Dynam. Systems 8 (3) (1988) 411--419.

\bibitem{MRQ} M. J. Mossinghoff, G. Rhin, Q. Wu, \emph{Minimal Mahler measures}, Experiment. Math. 17 (4) (2008) 451--458.


\bibitem{NR} D. G. Northcott, M. Reufel, \emph{A generalization of the concept of length}, Quart. J. of Math. (Oxford) (2) 16 (1965) 297--321.

\bibitem{Pet} J. Peters, \emph{Entropy on discrete Abelian groups}, Adv. Math. {33} (1979) 1--13.

\bibitem{Pet1}  J. Peters, \emph{Entropy of automorphisms on {L}.{C}.{A}. groups}, Pacific J. Math. {96} (2) (1981) 475--488.

\bibitem{P} L. S. Pontryagin, \emph{Topological Groups}, Gordon and Breach, New York 1966.

\bibitem{S} L. Salce, \emph{Some results on the algebraic entropy},  Groups and Model Theory, Contemporary Math. 576 (2012) 297--304.

\bibitem{SVV} L. Salce, P. V\'amos, S. Virili, \emph{Length functions, multiplicities and algebraic entropy}, Forum Math. 25 (2) (2013) 255--282.

\bibitem{SZ2} L. Salce, P. Zanardo, \emph{A general notion of algebraic entropy and the rank entropy},  Forum Math. 21 (4) (2009) 579--599.

\bibitem{Sinai} Y. G. Sinai, \emph{On the concept of entropy of a dynamical system}, Doklady Akad. Nauk. SSSR 124 (1959) 786--781 (in Russian).

\bibitem{Sm} C. Smyth, \emph{On the product of conjugates outside the unit circle of an algebraic integer}, Bull. London Math. Soc. {3} (1971) 169--175.
         
\bibitem{St} L. N. Stoyanov, \emph{Uniqueness of topological entropy for endomorphisms on compact groups}, Boll. Un. Mat. Ital. B (7) 1 (3) (1987) 829--847.

\bibitem{V} P. V\'amos, \emph{Additive Functions and Duality over Noetherian Rings}, Quart. J. of Math. (Oxford) (2) {19} (1968) 43--55.

\bibitem{Vi} S. Virili, \emph{Entropy for endomorphisms of LCA groups}, Topology Appl. 159 (9) (2012) 2546--2556.

\bibitem{Vi1} S. Virili, \emph{Algebraic and topological entropy of group actions}, preprint.


\bibitem{W}  M. D. Weiss, \emph{Algebraic and other entropies of group endomorphisms}, Math. Systems Theory {8} (3) (1974/75) 243--248.

\bibitem{Y} S. Yuzvinski, \emph{Metric properties of endomorphisms of compact groups}, Izv. Acad. Nauk SSSR, Ser. Mat. {29} (1965) 1295--1328 (in Russian). English Translation: Amer. Math. Soc. Transl. (2) 66 (1968) 63--98.

\bibitem{juz67} S.~A. Yuzvinski, \emph{Calculation of the entropy of a group-endomorphism}, Sibirsk. Mat. \u Z. 8 (1967) 230--239.

\bibitem{Z} P. Zanardo, \emph{Algebraic entropy of endomorphisms over local one-dimensional domains}, J. Algebra Appl. {8} (6) (2009) 759--777.

\end{thebibliography}
\end{document}